\documentclass[12pt]{amsart}
\usepackage{amssymb}

\setlength{\textwidth}{15.2cm}
\setlength{\textheight}{22.7cm}
\setlength{\topmargin}{0mm}
\setlength{\oddsidemargin}{3mm}
\setlength{\evensidemargin}{3mm}
\setlength{\footskip}{1cm}

\setcounter{section}{0}
\numberwithin{equation}{section}
\pagestyle{plain}

\theoremstyle{plain}
\newtheorem{thm}{Theorem}[section]
\newtheorem{prop}[thm]{Proposition}
\newtheorem{cor}[thm]{Corollary}
\newtheorem{lem}[thm]{Lemma}

\theoremstyle{definition}

\theoremstyle{remark}

\def\R{{\mathbb R}}
\def\N{{\mathbb N}}

\def\C{{\mathbb C}}

\def\p#1{{\left({#1}\right)}}
\def\b#1{{\left\{{#1}\right\}}}

\def\n#1{{\left\|{#1}\right\|}}
\def\abs#1{{\left|{#1}\right|}}
\def\jp#1{{\left\langle{#1}\right\rangle}}

\def\supp{\operatorname{supp}}


\title{Structural resolvent estimates and \\
derivative nonlinear Schr\"odinger
 equations}

\author[]{Michael Ruzhansky and Mitsuru Sugimoto}
\address{
  Michael Ruzhansky:
  \endgraf
  Department of Mathematics
  \endgraf
  Imperial College London
  \endgraf
  180 Queen's Gate, London SW7 2AZ, UK
  \endgraf
  {\it E-mail address} {\rm m.ruzhansky@imperial.ac.uk}
  \endgraf
  \medskip
  Mitsuru Sugimoto:
  \endgraf
  Graduate School of Mathematics
  \endgraf
  Nagoya University
  \endgraf
  Furocho, Chikusa-ku, Nagoya 464-8602, Japan
  \endgraf
  {\it E-mail address} {\rm sugimoto@math.nagoya-u.ac.jp}
  }

\thanks{The first
 author was supported by the EPSRC
 Leadership Fellowship EP/G007233/1.
}

\date{\today}

\begin{document}
\maketitle
\begin{abstract}
A refinement of uniform resolvent estimate is given
and several smoothing estimates for Schr\"odinger equations
in the critical case are induced from it.
The relation between this resolvent estimate and radiation
condition is discussed.
As an application of critical smoothing estimates, 
we show a global existence results 
for derivative nonlinear Schr\"odinger equations.
\end{abstract}
\section{Introduction}
Let us consider the resolvent operator
\[
R(z)=(-\Delta-z)^{-1}
\]
on $\R^n$.
It is defined only for $z\in\C\setminus\b{x\in\R;x>0}$
as an element of $\mathcal{L}(L^2,L^2)$,
but the weak limit
\[
R(\lambda\pm i0)=\lim_{\varepsilon\searrow0} R(\lambda\pm i\varepsilon)
\]
exists in $\mathcal{L}(L^2_k,L^2_{-k})$ for $k>1/2$, where
$L^2_m$ ($m\in\R$) denotes the set of 
functions $g$ such that the norm
\[
\n{g}_{L^2_m}
=\p{\int_{\R^n}\abs{\langle x\rangle^m g(x)}^2\,dx}^{1/2}
;\quad
\jp{x}=\p{1+|x|^2}^{1/2}
\]
is finite.
This fact was first pointed out by Agmon \cite{A}, and
is called {\it the limiting absorption principle}.
This principle can be justified by the resolvent estimate
\[
\sup_{\varepsilon>0}\n{R(\lambda\pm i\varepsilon)v}_{L^2_{-k}}
\le C_\lambda
\n{v}_{L^2_k}
\]
for $\lambda>0$ and $k>1/2$.
More generally, we have 
\begin{equation}\label{Agmon}
\n{D^\alpha R(\lambda\pm i0)v}_{L^2_{-k}}
\le C_\lambda
\n{v}_{L^2_k}
\end{equation}
for $\lambda\neq0$, $|\alpha|\leq2$, and $k>1/2$.
\par
Furthermore, if $n\geq2$, we have the uniform resolvent estimates
\begin{equation}\label{1.1}
\sup_{\lambda\in\R}\n{|D| R(\lambda\pm i0)v}_{L^2_{-k}}
\le C
\n{v}_{L^2_k},
\end{equation}
or equivalently
\begin{equation}\label{1.2}
\sup_{\lambda\in\R}
\n{
\sigma(X,D)
R(\lambda\pm i0)
\sigma(X,D)^*
v}_{L^2}
\le
C\n{v}_{L^2}
\end{equation}
for $\sigma(x,\xi)=\jp{x}^{-k}|\xi|^{1/2}$,
where $k>1/2$.
The uniform resolvent estimates such as \eqref{1.1} and \eqref{1.2}
are used to show
smoothing estimates for Schr\"odinger equations.
See, for example, \cite{KY}, \cite{W}, \cite{Ho1}, \cite{Ho2},
\cite{Su1}, \cite{Su2}, \cite{SuT}, \cite{RS2}.
We remark that we have used here the notation
\[
 \sigma(X,D,Y)
 =\p{2\pi}^{-n}\int_{\R^n}\int_{\R^n}
   e^{i(x-y)\cdot \xi}\sigma(x,y,\xi)u(y) dy d\xi
\]
as a pseudo-differential operator following Kumano-go \cite{Ku}.
We usually abbreviate $X$ (resp.~$Y$) 
if $\sigma(x,y,\xi)$ does not depend on
$x$ (resp.~$y$).
For $\sigma(X,D)$, we set $\sigma(X,D)^*=\bar\sigma(Y,D)$.

\par
The objective of this paper is to establish the following:
\begin{itemize}
\item
In uniform resolvent estimates \eqref{1.1} and \eqref{1.2},
the critical case $k=1/2$ or more general combination of the order
for the weight
can be attained if we assume a structure on $\sigma(X,D)$
(Section \ref{Section3}).
\item
The structure is related to a radiation condition
(Section \ref{Section4}).
\item
Such consideration can be used for the nonlinear problems
for Schr\"odinger equations
(Section \ref{Section6}).
\end{itemize}

Below we give the details,
together with the organisation of this article.
In Section \ref{Section2},
by following the argument in authors' previous work \cite{RS2},
we will show a refined version of the resolvent estimate
\eqref{Agmon}, which is associated with a structure
induced by $-\Delta$.
To understand the geometric meaning of this structure,
we will also consider more general elliptic operators $L$
instead of $-\triangle$.
After such preliminary results,
we will show in Section \ref{Section3} a uniform resolvent estimate
(Theorem \ref{RE}) which includes the 
following result as a special case
\medskip
\begin{thm}\label{RE:Lap}
Let $n\ge2$ and let $\theta\in\R$.
Suppose that $\sigma(x,\xi)\sim|\xi|^{\theta}$,
$\tau(x,\xi)\sim|\xi|^{1-\theta}$,
and $\sigma(x,\xi)=0$ on
$\Gamma=\b{(x,\xi)\in \R^n\times(\R^n\setminus0)\,;\,x\wedge\xi=0}$.
Then we have the estimate
\[
\sup_{\lambda\in\R}
\n{
\sigma(X,D)
R(\lambda\pm i0)\tau(X,D)^*
v}_{L^2_{-1/2+l}(\R^n)}
\le
C\n{v}_{L^2_{1/2+l}(\R^n)}
\]
$0<l<\min\b{1,(n-1)/2}$.
Suppose also that $\tau(x,\xi)=0$ on $\Gamma$.
Then estimate \eqref{eq:res3} is true for
$|l|<\min\b{1,(n-1)/2}$.
\end{thm}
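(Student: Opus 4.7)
\medskip

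\noindent\textbf{Proof plan.} My strategy is to combine the structural (fixed-$\lambda$) resolvent estimate established in Section~\ref{Section2} with a scaling argument to promote it to a uniform-in-$\lambda$ bound, and then to extend the range of $l$ using duality.

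The foundation is the refined Agmon-type estimate from Section~\ref{Section2}, which asserts that for the Laplacian, imposing $\sigma|_\Gamma=0$ on one side (or on both sides) of the resolvent allows the critical endpoint of the weighted resolvent estimate to be reached. Geometrically, the hypothesis kills the component of $\xi$ parallel to $x$, which is precisely the direction of outgoing propagation responsible for the logarithmic divergence that stops the classical estimate \eqref{Agmon} at $k>1/2$. With both $\sigma$ and $\tau$ vanishing on $\Gamma$ one expects the fully critical bound at $l=0$ to hold; when only $\sigma$ vanishes, the gain appears on the left side only, and a positive shift $l>0$ in both weights absorbs the remaining loss on the right.

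To pass from the fixed-$\lambda$ estimate to a bound uniform in $\lambda$, I would rescale $x\mapsto\lambda^{-1/2}x$, under which $R(\lambda\pm i0)$ transforms into $\lambda^{-1}R(1\pm i0)$. The set $\Gamma=\{x\wedge\xi=0\}$ is invariant under the induced phase-space dilation $(x,\xi)\mapsto(\lambda^{-1/2}x,\lambda^{1/2}\xi)$, so the vanishing hypothesis is preserved. Since $\sigma$ has order $\theta$ and $\tau$ has order $1-\theta$, the $\lambda$-factors produced under rescaling combine with the $\lambda^{-1}$ from the resolvent to give a quantity that is scale-invariant and in particular independent of the splitting parameter $\theta$. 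The weights $\langle x\rangle^{\pm(1/2+l)}$ transform to $\langle\lambda^{-1/2}x\rangle^{\pm(1/2+l)}$; I expect the upper bound $l<1$ to arise from the pseudo-differential calculus needed to absorb these shifted weights into the symbols $\sigma,\tau$, while the dimensional bound $l<(n-1)/2$ should come from an angular Sobolev-type embedding on $S^{n-1}$ used to convert vanishing on $\Gamma$ into quantitative angular regularity of the resolvent.

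For the second assertion, the symmetric range $|l|<\min\{1,(n-1)/2\}$ follows by duality. When both $\sigma$ and $\tau$ vanish on $\Gamma$, the estimate is symmetric under taking adjoints and swapping $\sigma\leftrightarrow\tau$, so the bound with parameter $-l$ is the dual of the one with parameter $l$, and combining the two directions fills in the full symmetric interval. The main obstacle I anticipate is the transition from the fixed-$\lambda$ structural estimate to the uniform bound: while the rescaling is conceptually clean, controlling the non-principal components of $\sigma$ and $\tau$ and their behaviour near $\xi=0$ under the dilation requires care, and matching the endpoints of the admissible $l$-interval without losing the critical weight is where the technical heart of the argument should lie.
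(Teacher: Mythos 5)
Your high-level strategy---deduce the uniform bound by combining the fixed-$\lambda$ structural estimate (Proposition~\ref{LAP}) with a dilation argument, then obtain the symmetric range by duality---does match the paper's overall plan, and the observation that $\Gamma$ is invariant under the parabolic scaling is correct and essential. However, there are concrete gaps that would have to be filled for this to become a proof.

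First, the case $\lambda=0$ is left out entirely. Scaling compresses $\lambda\neq 0$ to $\lambda=\pm 1$, but cannot reach $\lambda=0$, and $R(0\pm i0)=p(D)^{-2}$ must be handled separately. The paper does this by bounding $\jp{x}^{-1/2\pm l}$ by homogeneous weights $|x|^{-1/2\pm l}$ and invoking a kernel estimate (Lemma~\ref{lemma2}, built on the Stein--Weiss/Hardy--Littlewood inequality Lemma~\ref{SW}). Second, and more seriously, after scaling the weights become $\jp{\lambda^{-1/2}x}^{-1/2\pm l}$ rather than $\jp{x}^{-1/2\pm l}$, so Proposition~\ref{LAP} does not directly apply. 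The paper compensates by bounding these rescaled weights by homogeneous powers $|x|^{-1/2\pm l}$ with uniform constants, and then performing a further decomposition into four pieces: near/far from the spatial origin (cutoff $\rho$) and low/high frequency (cutoff $\chi\circ p$). Only the piece supported away from the origin and near the unit sphere in frequency is estimated by Proposition~\ref{LAP}; the remaining pieces require Lemma~\ref{lemma2}, the Kurtz--Wheeden weighted multiplier bound (Lemma~\ref{lemma1}), and the inhomogeneous Sugimoto--Tsujimoto resolvent estimate (Lemma~\ref{SuTsu}). Saying that the shifted weights can be ``absorbed into the symbols by the pseudo-differential calculus'' glosses over precisely the step where the scale-invariance of the estimate is really earned.

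Finally, your attribution of the two constraints is off: the bound $l<1$ comes from the interpolation step inside Proposition~\ref{LAP} (one needs $1-|l|>0$ to choose an admissible $\epsilon$), not from the pseudo-differential calculus; and the bound $l<(n-1)/2$ comes from the Stein--Weiss condition $b<\delta+n/2$ in Lemma~\ref{lemma2} when applied with $b=1$, $\delta=1/2-l$, not from an angular Sobolev embedding on $S^{n-1}$. The structural condition $\sigma|_\Gamma=0$ is exploited not through angular regularity of the resolvent but through the factorisation Lemma~\ref{basiclem}, which trades the vanishing of $\sigma$ on $\Gamma$ for an extra order of decay in $x$ at the price of inserting $\Omega_{ij}(X,D)$, an operator that commutes with the resolvent (Lemma~\ref{Com}).
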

\medskip
Here the notation $\sigma(x,\xi)\sim|\xi|^b$ means that
$\sigma(x,\xi)$ is positively homogeneous of order $b$ in
the variable $\xi$ and its derivatives satisfy a natural decaying
property. 
For the precise definition, see \eqref{sigma'}.
Here we have also used the notation 
$a\wedge b=(a_ib_j-a_jb_i)_{i<j}$ for vectors
$a=(a_1,a_2,\ldots,a_n)$ and $b=(b_1,b_2,\ldots,b_n)$.
Theorem \ref{RE:Lap} with $l=0$ corresponds to estimates 
\eqref{1.1} and \eqref{1.2} with the critical case $k=1/2$, and
furthermore, we have freedom in the choice of $l$.
Such advantages come from the 
extra structure conditions $\sigma(x,\xi)=0$,
$\tau(x,\xi)=0$ on the set $\Gamma$.
We remark that the case of homogeneous weight $|x|^{-1/2}$ 
instead of $\jp{x}^{-1/2}$ was considered in \cite{RS2} (when $l=0$).
\par
In Section 4, we will explain the relation between our result
Theorem \ref{RE:Lap}
and Herbst-Skibsted's resolvent estimate in \cite{HS},
where the symbol $\sigma(x,\xi)=\xi \mp (x/|x|)|\xi|$ 
vanishing only on the half part of $\Gamma$
is used to show results similar to Theorem \ref{RE:Lap}.
We remark that such result induces Sommerfeld's radiation condition.
\par
Theorem \ref{RE:Lap} implies 
many estimates for Schr\"odinger equations.
For example, if we follow the terminology in \cite{KY}, Theorem \ref{RE:Lap}
with the case $\sigma(x,\xi)=\tau(x,\xi)$ and $l=0$
means that
the operator $\tilde\sigma(X,D)=\jp{X}^{-1/2}\sigma(X,D)$ is
$-\Delta$-supersmooth.
Due to Kato's work \cite{K}, this property automatically induces
smoothing estimates for Schr\"odinger equations,
which covers the critical case of the estimates obtained by
Ben-Artzi and Klainerman \cite{BK} or Chihara \cite{Ch2}.
Such results will be displayed in Section \ref{Section5}.
\par
As an application of smoothing estimates induced from Theorem \ref{RE:Lap},
we will consider
in Section \ref{Section6} the existence of time global solutions
to the following derivative nonlinear Schr\"odinger equation
\[
\left\{
\begin{aligned}
\p{i\partial_t+\triangle}\,u(t,x)=&f\p{\nabla u(t,x)}\\
u(0,x)=\varphi(x),\quad &t\in\R,\, x\in\R^n,
\end{aligned}
\right.
\]
where $\triangle$ and $\nabla$ denote the Laplacian 
and the gradient
in $x\in\R^n$, respectively.
The question is which conditions on the 
initial data $\varphi$ guarantee the global in time 
existence of solutions?
Some answers in the case when
$f(u)$ has a polynomial growth
of order $N$ are known:
\begin{itemize}
\item
$\varphi\in C^\infty$, rapidly decay, and sufficiently small
in the case $N\geq3$ (Chihara \cite{Ch}).
\item
$\varphi\in H^{[n/2]+5}$, rapidly decay, and sufficiently small
in the case $N\geq2$
(Hayashi, Miao, and Naumkin \cite{HMN}).
\item
$\varphi\in H^{n/2+2}$, sufficiently small
in the case $N\geq3$
(Ozawa and Zhai \cite{OZ}).

\end{itemize}
\par
By using smoothing estimates obtained in Section \ref{Section5},
we can weaken the smoothness assumption on $\varphi$
if the nonlinear term has a null-form structure like
$f(x/\jp{x}\wedge\nabla u)$.
As it was announced in \cite{RS3}, the regularity index $[n/2]+5$,
or even $n/2+2$, can be replaced by a smaller one in this case.
We give an example of these results:
\medskip
\begin{thm}\label{thm:thm0}
Let $n\geq3$, let $N\geq4$, $N\in\N$, let $\kappa>3(N+1)/4(N-1)$,
and let $\tau>(n+3)/2$.
Assume that $\jp{x}^\kappa \jp{D}^\tau\varphi\in L^2$ and
its $L^2$-norm is sufficiently small.
Then the equation
\[
\left\{
\begin{aligned}
\p{i\partial_t+\triangle}\,u(t,x)=&\abs{\p{x/\jp{x}}
\wedge\nabla u}^N, \\
u(0,x)=\varphi(x),\quad &t\in\R,\, x\in\R^n,
\end{aligned}
\right.
\]
has a time global solution $u\in C^0(\R_t\times\R^n_x)$.
\end{thm}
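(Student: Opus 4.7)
The plan is a contraction mapping argument on a Banach space $\mathcal{X}$ tailored to the critical smoothing estimates coming from Theorem \ref{RE:Lap}. I would seek a solution of the Duhamel formula
\[
\Phi(u)(t)=e^{it\triangle}\varphi-i\int_0^t e^{i(t-s)\triangle}\abs{\p{x/\jp{x}}\wedge\nabla u(s)}^N\,ds
\]
with $u\in\mathcal{X}$, where $\mathcal{X}$ consists of $u\in C^0(\R_t\times\R^n_x)$ with
\[
\n{u}_{\mathcal{X}}=\n{\jp{D}^\tau u}_{L^\infty_tL^2_x}+\n{\jp{X}^{-1/2+l}\sigma(X,D)\jp{D}^{\tau-1}u}_{L^2_tL^2_x}<\infty,
\]
$\sigma(x,\xi)=\p{x/\jp{x}}\wedge\xi\cdot\abs{\xi}^{-1/2}$ vanishes on the set $\Gamma$ of Theorem \ref{RE:Lap}, and $l$ is chosen in the range permitted there so as to match the decay hypothesis on $\varphi$. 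The first term of $\n{u}_{\mathcal{X}}$ records the regularity needed for a Sobolev embedding into $L^\infty$, while the second captures the null-direction smoothing that the structure of the nonlinearity allows us to exploit.

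The linear part of $\Phi$ is controlled by energy conservation together with the smoothing estimates to be derived in Section \ref{Section5} from Theorem \ref{RE:Lap}: since $\sigma$ vanishes on $\Gamma$, the symmetric case $\theta=1/2$ yields
\[
\n{\jp{X}^{-1/2+l}\sigma(X,D)\jp{D}^{\tau-1}e^{it\triangle}\varphi}_{L^2_tL^2_x}\le C\n{\jp{D}^\tau\varphi}_{L^2},
\]
and dual smoothing combined with the Christ--Kiselev lemma reduces the bound on $\Phi(u)-e^{it\triangle}\varphi$ in $\mathcal{X}$ to the nonlinear estimate
\[
\n{\jp{X}^{1/2+l}\jp{D}^{\tau-1}\abs{\p{x/\jp{x}}\wedge\nabla u}^N}_{L^2_tL^2_x}\le C\n{u}_{\mathcal{X}}^N.
\]
I would prove this by H\"older in time after a Kato--Ponce expansion of $\jp{D}^{\tau-1}$ across the product: observing that $\p{x/\jp{x}}\wedge\nabla=\sigma_0(X,D)$ for a symbol $\sigma_0$ of order $1$ vanishing on $\Gamma$, one factor is rewritten as $\jp{X}^{-1}\sigma(X,D)\jp{D}^{1/2}u$ and kept in the weighted $L^2_{t,x}$ smoothing norm, absorbing the positive weight $\jp{X}^{1/2+l}$, while the remaining $N-1$ factors are placed in $L^{2(N-1)}_tL^\infty_x$ and estimated via Sobolev embedding from $L^\infty_tH^{\tau-1}_x$. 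The threshold $\tau>(n+3)/2$ appears so that $H^{\tau-1}\hookrightarrow L^\infty$ with enough room to absorb the fractional derivative $\jp{D}^{\tau-1}$ through a product rule, and $\kappa>3(N+1)/(4(N-1))$ is the balance that allows the positive weight $\jp{X}^{1/2+l}$ to be absorbed by the $N$ factors of decay coming from the hypothesis $\jp{X}^\kappa\jp{D}^\tau\varphi\in L^2$ after interpolating the two components of $\n{\cdot}_{\mathcal{X}}$.

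The main obstacle I anticipate is the commutator analysis required to move $\jp{D}^{\tau-1}$ and $\sigma(X,D)$ past the $N$-th power while preserving both the structural vanishing of $\sigma$ on $\Gamma$ and the sharpness of the weight $\jp{X}^{-1/2+l}$. A careful application of the structural symbol class \eqref{sigma'}, combined with the factorisation $\p{x/\jp{x}}\wedge\nabla=\sigma_0(X,D)$, should reduce everything to the product estimate above; a secondary point is propagating the decay $\jp{X}^\kappa$ of the initial data through $e^{it\triangle}$, which is available because the symmetric form of Theorem \ref{RE:Lap} holds for $|l|<\min\b{1,(n-1)/2}$. Once these pieces are in place, the smallness of $\n{\jp{X}^\kappa\jp{D}^\tau\varphi}_{L^2}$ together with the $N\ge 4$ factors of $u$ in the nonlinearity make $\Phi$ a strict contraction on a small closed ball of $\mathcal{X}$; the same analysis applied to $\Phi(u)-\Phi(v)$ gives uniqueness, and the resulting fixed point is the desired global $C^0$ solution.
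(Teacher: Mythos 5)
The paper's own proof works entirely inside the space--time Sobolev space $H^{s,\tilde s}(\R_t\times\R^n_x)$ and uses a single pseudonorm $\n{u}_X=\n{\jp{x}^{-1/2+l}\sigma(X,D)u}_{H^{s,\tilde s}}$; the inhomogeneous smoothing estimate (Theorem \ref{thm:thm62}) is obtained directly from the resolvent estimates by Fourier transforming in $t$, so no Christ--Kiselev argument is needed, and the nonlinear estimate \eqref{alg1} is obtained from the \emph{algebra property} of $H^{s,\tilde s}$ with the weight distributed evenly, $\jp{x}^{1/2+l}=\prod_{j=1}^N\jp{x}^{(1/2+l)/N}$, which matches the smoothing weight $\jp{x}^{-1/2+l}$ precisely when $l=(N+1)/(2(N-1))$. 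That balance is the entire reason the theorem requires $N\ge4$ and fixes $\kappa>3(N+1)/(4(N-1))$.

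Your proposal replaces this with a two-component energy/local-smoothing norm $\n{\jp{D}^\tau u}_{L^\infty_tL^2_x}+\n{\jp{X}^{-1/2+l}\sigma(X,D)\jp{D}^{\tau-1}u}_{L^2_{t,x}}$, and here there is a genuine gap in the nonlinear step. To bound $\n{\jp{X}^{1/2+l}F^N}_{L^2_{t,x}}$ with $F=\sigma_0(X,D)u$, you put one factor into the smoothing norm carrying $\jp{X}^{-1/2+l}$; the remaining $N-1$ factors must then absorb a \emph{positive} power $\jp{X}^{(1/2+l)-(-1/2+l)}=\jp{X}^{1}$. But those factors are controlled only through $L^\infty_tL^2_x$ and Sobolev embedding, which records no spatial decay of $u$, so the leftover $\jp{X}$ is not bounded. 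You suggest recovering the missing decay from $\jp{x}^\kappa\jp{D}^\tau\varphi\in L^2$ ``after interpolating the two components,'' but neither component of $\n{\cdot}_{\mathcal{X}}$ contains any $x$-decay information to interpolate, and propagating $\jp{x}^\kappa$-decay of the solution through $e^{it\triangle}$ is not a free step (it is precisely what one tries to avoid). In the paper the decay hypothesis on $\varphi$ is used \emph{only} for the linear/free term in Corollary \ref{thm:thm6}; no decay of the nonlinear solution is ever needed. A secondary slip: your H\"older placement $L^{2(N-1)}_tL^\infty_x$ is not available from $L^\infty_tL^2_x$ on a global time interval (you would need $L^\infty_tL^\infty_x$ there), and your symbol $\sigma(x,\xi)=\p{x/\jp{x}}\wedge\xi\,\abs{\xi}^{-1/2}$ is homogeneous of order $1/2$ whereas the nonlinearity is built from the order-$1$ operator $\sigma_0(X,D)=\p{x/\jp{x}}\wedge\nabla$; the bookkeeping $\jp{D}^{\tau-1}\sigma(X,D)$ versus $\sigma_0(X,D)\jp{D}^{\tau-1}$ must then be reconciled. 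Fixing the weight imbalance is the crucial missing idea; the cleanest route is precisely the paper's: work in $H^{s,\tilde s}(\R_t\times\R^n_x)$, use its algebra property to split the weight into $N$ equal pieces, and choose $l=(N+1)/(2N-2)$ so each piece equals the smoothing weight $-1/2+l$.
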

\medskip
The proof of Theorem \ref{thm:thm0} is a 
simple application of the fixed
point theorem on contraction mapping.
The freedom of the choice of $l$ in 
Theorem \ref{RE:Lap}, which is due to
the structure of nonlinear term,
enables us to induce the contraction directly.
\par
The phenomena that some structure of the nonlinear term
has effects on the regularity problem can be seen in many nonlinear
equations.
For example, Klainerman-Machedon \cite{KM1}, \cite{KM2} showed that
wave equations with nonlinear terms satisfying the {\it null condition}
have local existence and uniqueness in the Sobolev space $H^s$
for smaller $s$ than that of wave equations 
with general nonlinear terms.
Theorem \ref{thm:thm0} can be regarded as one of such phenomena
for Schr\"odinger equations.

\medskip
\par
\section{A resolvent estimate with structure}\label{Section2}
In this section, we establish
a refined version of the resolvent estimate
\eqref{1.2} in Introduction, which is associated with a structure
induced by $-\Delta$.
To describe it, we also generalise the operator
$L=-\triangle$, which will also enable us to clarify its
geometric meaning.
For the purpose, we introduce notations which will
be used in the rest of this paper.
Let
\begin{equation}\label{def:Lp}
\begin{aligned}
 L=p(D)^2\,;\quad &p(\xi)\in C^\infty(\R^n\setminus0),
\quad p(\xi)>0,\quad p(\lambda\xi)=\lambda p(\xi) \quad(\lambda>0),
\\
&\text{$\b{\xi\in\R^n\setminus0: p(\xi)=1}$ has non-vanishing
 Gaussian curvature.}
\end{aligned}
\end{equation} 
Then 
$
\psi(\xi)=p(\xi)\frac{\nabla p(\xi)}{\abs{\nabla p(\xi)}}
$
defines a $C^\infty$-diffeomorphism on $\R^n\setminus0$, and we set
\begin{equation}\label{Omega}
\begin{aligned}
&\Omega(x,\xi)=x\psi'(\xi)^{-1} \wedge\psi(\xi)
=\p{\Omega_{ij}(x,\xi)}_{i<j},
\\
&\omega(x,\xi)=\jp{x}^{-1}\Omega(x,\xi)=\p{\omega_{ij}(x,\xi)}_{i<j}.
\end{aligned}
\end{equation}
We remark that $L=-\triangle$ and 
$\omega(x,\xi)=\p{x/\jp{x}}\wedge \xi$
if $p(\xi)=|\xi|$.
We also set
\begin{align*}
&R(z)=(L-z)^{-1}
=\mathcal F^{-1}_\xi(p(\xi)^2-z)^{-1}\mathcal F_x,
\\
&R(\lambda\pm i0)=\lim_{\varepsilon\searrow0} R(\lambda\pm i\varepsilon)
\end{align*}
for $z\in \C\setminus\b{x\in\R\,;\,x\geq 0}$ and $\lambda\in\R$,
where $\mathcal F_x,\mathcal F^{-1}_\xi$ 
denote the Fourier transformation and its inverse,
respectively, and the limit is taken in the sense of distributions.
For $\lambda<0$, we have $R(\lambda\pm i0)=R(\lambda)$.
We remark that $\Omega$ commutes with functions of the operator $p(D)$.
\medskip
\begin{lem}\label{Com}
Let $h\in C^\infty\p{\R\setminus0}$.
Then the operators of the form
$\p{h\circ p}(D)$ commute with $\Omega_{ij}(X,D)$
whenever it makes sense.
In particular, $R(\lambda\pm i0)$ commutes with $\Omega_{ij}(X,D)$.
\end{lem}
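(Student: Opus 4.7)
The plan is to reduce the commutator to a Fourier multiplier whose full symbol vanishes identically. Since $\Omega_{ij}(x,\xi)$ is linear in $x$ and independent of $y$, setting
\[
B_{ij,m}(\xi)=[\psi'(\xi)^{-1}]_{mi}\,\psi_j(\xi)-[\psi'(\xi)^{-1}]_{mj}\,\psi_i(\xi),
\]
the Kumano-go calculus gives the exact representation $\Omega_{ij}(X,D)=\sum_{m=1}^{n} x_m\,B_{ij,m}(D)$. Fourier multipliers commute with each other, so the only non-trivial piece is the standard identity $[(h\circ p)(D),x_m]=i\,\partial_{\xi_m}(h\circ p)(D)=i\,h'(p(D))\,(\partial_m p)(D)$. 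Summing in $m$, the full commutator is itself a Fourier multiplier with symbol
\[
i\,h'(p(\xi))\sum_{m=1}^{n}\partial_m p(\xi)\,B_{ij,m}(\xi).
\]

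The entire proof therefore reduces to the algebraic identity $\sum_m \partial_m p(\xi)\,[\psi'(\xi)^{-1}]_{mi}=\psi_i(\xi)/p(\xi)$ for each $i$. This is where the special structure of $\psi$ enters: by its definition one has $\abs{\psi(\xi)}=p(\xi)$, so under the diffeomorphism $\eta=\psi(\xi)$ the function $p$ transforms into $|\eta|$. Applying the chain rule $\partial/\partial\eta_i=\sum_m [\psi'(\xi)^{-1}]_{mi}\,\partial/\partial\xi_m$ to $p$ yields $\eta_i/|\eta|=\psi_i(\xi)/p(\xi)$ on one side and exactly the required sum on the other. Plugging this back into the symbol expression above, the two terms cancel: $\psi_i\psi_j/p-\psi_j\psi_i/p=0$, so the commutator symbol vanishes identically.

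The resolvent statement then follows by taking $h_\varepsilon(t)=(t^2-\lambda\mp i\varepsilon)^{-1}\in C^\infty(\R\setminus 0)$, for which the already-established identity gives $[R(\lambda\pm i\varepsilon),\Omega_{ij}(X,D)]=0$, and then letting $\varepsilon\searrow 0$ in $\mathcal{L}(L^2_k,L^2_{-k})$ for $k>1/2$ via the limiting absorption principle. The only genuine obstacle is the bookkeeping of domains: because of its $x$-factor, $\Omega_{ij}(X,D)$ is unbounded on $L^2$, so the identity should first be read on Schwartz functions with frequency support in a compact set of $\R^n\setminus 0$, where every composition is unambiguously defined, and then extended by density to the weighted spaces on which both sides are meaningful.
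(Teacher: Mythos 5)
Your argument is correct and gives a genuine self-contained proof, whereas the paper simply refers the reader to Lemma~3.2 of \cite{RS2}; the underlying mechanism you identify (the diffeomorphism $\psi$ intertwines $p$ with $|\cdot|$, so $\Omega$ is the pullback of the classical angular momentum, which commutes with radial multipliers) is exactly the one driving that reference. Two minor points. First, a sign: with the Fourier convention implicit in the Kumano-go quantisation used in the paper one has $[a(D),x_m]=-i(\partial_{\xi_m}a)(D)$, not $+i$; this is immaterial since the resulting symbol is shown to vanish anyway. Second, your key identity $\sum_m \partial_m p(\xi)\,[\psi'(\xi)^{-1}]_{mi}=\psi_i(\xi)/p(\xi)$ can be seen even more directly by differentiating $|\psi(\xi)|^2=p(\xi)^2$, which gives $\sum_k\psi_k\,[\psi'(\xi)]_{km}=p\,\partial_m p$, i.e.\ $\psi^{\mathrm T}\psi'(\xi)=p\,(\nabla p)^{\mathrm T}$, and then multiplying by $\psi'(\xi)^{-1}$ on the right; this avoids invoking the inverse-function chain rule and makes the cancellation $\psi_i\psi_j-\psi_j\psi_i=0$ transparent. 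Your handling of the domain issue (first on Schwartz functions with frequency support away from the origin, then passing to the weighted $L^2$ limit for the resolvent via the limiting absorption principle) is the right level of care for this statement.
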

\begin{proof}
See the proof of \cite[Lemma 3.2]{RS2}.
\end{proof}
\medskip
Let us also introduce
the classical orbit $\b{\p{x(t),\xi(t)}:\,t\in\R}$
associated to the operator $L$ defined by \eqref{def:Lp},
which satisfies
\begin{equation}\label{eq:cl}
\left\{
\begin{aligned}
\dot{x}(t)&=\nabla_\xi p^2(\xi(t)), \quad\dot{\xi}(t)=0,
\\
x(0)&=0,\quad \xi(0)=\eta,
\end{aligned}
\right.
\end{equation}
and define the set of the paths of all classical orbits:
\begin{equation}\label{eq:orbits}
\begin{aligned}
\Gamma
=&\b{\p{x(t),\xi(t)}\,:\,t\in\R,
\eta \in\R^n\setminus 0}
\\
=&\b{\p{\lambda\nabla p(\xi),\xi}\,:
\,\xi\in\R^n\setminus 0,\lambda\in\R}.
\end{aligned}
\end{equation}
In the case $L=-\triangle$, for example, we have
\begin{align*}
\Gamma=&\b{\p{\lambda\xi,\xi}\,:
\,\xi\in\R^n\setminus 0,\lambda\in\R}
\\
=&\b{\p{x,\xi}\in\R^n\times(\R^n\setminus0)\,:\,x\wedge\xi=0}.
\end{align*}
By using it, we define a structure induced by the operator
$L$:
\begin{equation}\label{struc}
\sigma(x,\xi)=0\quad \text{if}\quad
(x,\xi)\in\Gamma.
\end{equation}
We remark that $\omega_{ij}(x,\xi)$ in \eqref{Omega} satisfy
\eqref{struc}
(see \cite[Lemma 3.1]{RS2}).
With the dual function $p^*(x)$ of $p(x)$ 
which satisfies $p^*(\nabla p(x))=1$ (see \cite[Theorem 3.1]{RS2}),
elements of
\[
\omega^*(x,\xi)=a(x)\nabla p^*(x)\wedge \xi
\]
are also examples of $\sigma(x,\xi)$
which satisfies \eqref{struc}, where $a(x)$ is an appropriate 
function whose support is away from the origin.

We say that $\sigma(x,\xi)$ is of the class $\mathcal{A}^m_k$
if it satisfies
\[
\abs{\partial_x^\alpha\partial_\xi^\gamma
\sigma(x,\xi)} \leq C_{\alpha\gamma}
\jp{x}^{m-|\alpha|}\jp{\xi}^{k-|\gamma|},
\]
for all $\alpha$ and $\gamma$.
In the case $k=0$, we abbreviate it writing $\mathcal{A}^m$.
Then we have the following proposition:
\medskip
\begin{prop}\label{LAP}
Let $n\geq2$, let $\lambda\in\R$,
and let $\chi\in C^\infty_0\p{\R^n\setminus0}$.
Suppose that $\sigma(x,\xi)\in\mathcal{A}^{-1/2+l}$,
$\tau(x,\xi)\in\mathcal{A}^{-1/2-l}$
and
$\sigma(x,\xi)=0$ on $\Gamma$.
Then there is some $N\in\N$ such that we have the estimate
\begin{align*}
&\n{
\sigma(X,D)
R(\lambda\pm i0)
\chi\p{D}
\tau(X,D)^*
v}_{L^2(\R^n)}
\\
\le
&C_{\lambda,\chi}
\p{
\sup_{\substack{x,\xi\in\R^n\\ |\alpha|+|\gamma|\leq N}}
\abs{
\frac{\partial_x^\alpha\partial_\xi^\gamma\sigma(x,\xi)}
{\jp{x}^{-1/2+l-|\alpha|}\jp{\xi}^{-|\gamma|}}
}
}
\p{
\sup_{\substack{x,\xi\in\R^n\\ |\alpha|+|\gamma|\leq N}}
\abs{
\frac{\partial_x^\alpha\partial_\xi^\gamma\tau(x,\xi)}
{\jp{x}^{-1/2-l-|\alpha|}\jp{\xi}^{-|\gamma|}}
}
}
\n{v}_{L^2(\R^n)}
\end{align*}
for $0<l<1$.
Suppose also that
$\tau(x,\xi)=0$ on $\Gamma$.
Then the estimate is true for $-1<l<1$.
\end{prop}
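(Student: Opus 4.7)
The approach is to adapt the argument of the authors' earlier paper \cite{RS2}, where the analogous result with the homogeneous weight $|x|^{-1/2}$ was obtained, and to convert the structural hypothesis $\sigma=0$ on $\Gamma$ into an effective unit of $x$-decay by factoring through the commuting operators $\Omega_{ij}(X,D)$. Since $\Gamma$ is the common zero set of the symbols $\omega_{ij}(x,\xi)$ on the support of $\chi$, and these symbols generate the ideal of smooth functions vanishing on $\Gamma$ (a submanifold of codimension $n-1$ in $T^*\R^n$, cut out by the $\binom{n}{2}$ equations $\omega_{ij}=0$), a Hadamard-type smooth division, performed after inserting a cutoff $\chi_1\in C_0^\infty(\R^n\setminus 0)$ equal to $1$ on $\supp\chi$, produces a decomposition
\[
\sigma(x,\xi)\chi_1(\xi) = \sum_{i<j}\omega_{ij}(x,\xi)\,a_{ij}(x,\xi),
\]
with $a_{ij}\in\mathcal{A}^{-1/2+l}$ whose seminorms are dominated by those of $\sigma$.

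Substituting this into the composition and using that, modulo lower-order operators in the pseudo-differential calculus, $\omega_{ij}(X,D)=\jp{X}^{-1}\Omega_{ij}(X,D)$, the expression $\sigma(X,D)R(\lambda\pm i0)\chi(D)\tau(X,D)^*$ is rewritten as a finite sum of terms of the schematic form $a_{ij}(X,D)\jp{X}^{-1}\Omega_{ij}(X,D)R(\lambda\pm i0)\chi(D)\tau(X,D)^*$, plus commutator remainders. Lemma \ref{Com} then permits $\Omega_{ij}$ to be passed through $R(\lambda\pm i0)$, producing a main term of the form
\[
a_{ij}(X,D)\jp{X}^{-1}\,R(\lambda\pm i0)\,\chi(D)\tau(X,D)^*\Omega_{ij}(X,D)+\text{l.o.t.}
\]
The left-hand operator $a_{ij}(X,D)\jp{X}^{-1}$ now has effective $x$-order $-3/2+l$; for $l<1$ this is strictly below the Agmon threshold $-1/2$, while the right-hand operator $\chi(D)\tau(X,D)^*\Omega_{ij}(X,D)$, with $\xi$ localised by $\chi(D)$, is balanced by duality. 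A bilinear $T^*T$ argument combined with the classical estimate \eqref{Agmon} closes the proof for $0<l<1$, with the constant controlled by the prescribed finite list of symbol seminorms of $\sigma$ and $\tau$.

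For the extended range $|l|<1$ under the additional hypothesis that $\tau$ also vanishes on $\Gamma$, the same factorisation is applied to $\tau$ on the right side, and Lemma \ref{Com} is used again (through the adjoint) to produce a second $\jp{X}^{-1}$ adjacent to $R(\lambda\pm i0)$ from the right. This symmetrises the decay gain between the two sides of $R(\lambda\pm i0)$ and absorbs the growth introduced by the new $\Omega_{ij}$ factors, so that the bilinear Agmon bound applies in the extended range $-1<l<1$.

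The main technical obstacle is the smooth-division lemma with quantitative control of the $\mathcal{A}^{-1/2\pm l}$-seminorms, together with careful bookkeeping of the pseudo-differential remainders produced by the various commutations; each such remainder must be shown to gain either enough extra $x$-decay or enough $\xi$-smoothness (the latter supplied by the cutoff $\chi(D)$) so that it can be reabsorbed into the main estimate via \eqref{Agmon}. The replacement of the homogeneous weight $|x|^{-1/2}$ of \cite{RS2} by the smooth weight $\jp{x}^{-1/2}$ amounts to verifying that the division, commutations, and remainder estimates behave uniformly down to $x=0$, which is immediate for $\jp{x}$.
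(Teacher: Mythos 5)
Your opening strategy is on target and matches the paper's first reduction: the structural vanishing $\sigma=0$ on $\Gamma$ is indeed exploited by a smooth-division-through-$\omega_{ij}$ argument (in the paper this is packaged as Lemma~\ref{basiclem}, quoted from \cite[Prop.~3.3]{RS2}), and Lemma~\ref{Com} is then used to transport the $\Omega_{ij}$ factors through $R(\lambda\pm i0)$. After absorbing the polynomial growth of $\Omega_{ij}$ and the cutoff $\chi(D)$, one is indeed reduced to bounding an operator of the schematic form $\jp{x}^{-3/2+l}K_{\lambda,\chi}\jp{x}^{1/2-l}$ (resp.\ $\jp{x}^{1/2+l}K_{\lambda,\chi}\jp{x}^{-3/2-l}$ in the two-sided case). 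Up to here your outline is sound.

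The gap is in the closing step. You assert that once the left factor has ``effective $x$-order $-3/2+l$'', a $T^*T$ argument combined with the classical Agmon estimate \eqref{Agmon} finishes the job. This does not work, because the weights on the two sides of the resolvent are now wildly asymmetric, and for $l<1/2$ the right-hand weight $\jp{x}^{1/2-l}$ is actually \emph{growing}. Agmon's estimate requires weights $\jp{x}^{-k}$ with $k>1/2$ on \emph{both} sides; a bound with a growing weight on one side is not a consequence of it, nor is it recoverable from the symmetric case by $T^*T$ duality (the duality argument only swaps the weights, it does not make one of them negative). What makes the reduced operator bounded is precisely that the total weight $-3/2+l+1/2-l=-1$ is at the critical threshold; the paper handles this by a microlocal slicing argument: localizing $\chi$ to a small conic neighbourhood of a direction, writing the kernel estimate in Lemma~\ref{Res} uniformly in the last coordinates $x_n,y_n$, interpolating between the two weight distributions from \eqref{T} to produce a kernel bound of the form $|x_n|^{-1/2+\epsilon}|x_n-y_n|^{-2\epsilon}|y_n|^{-1/2+\epsilon}$, and then invoking the one-dimensional Hardy--Littlewood fractional integration estimate (Lemma~\ref{SW}). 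None of these ingredients appear in your proposal, and without them the argument does not close.

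A secondary point: the commutations that convert $\jp{x}^{-3/2+l}(\Omega_{ij})^k K_{\lambda,\chi}(\Omega_{i'j'}^*)^{k'}\jp{x}^{-3/2-l}$ into the tractable form \eqref{T} are not merely ``pseudo-differential remainders to be reabsorbed''; the paper relies on the specific structure that the symbol of $\Omega_{ij}$ is linear in $x$ and positively homogeneous of degree $1$ in $\xi$, and that the $\chi(D)$ cutoff renders the resulting Fourier multipliers compactly supported. This is where the reduction to \eqref{T} comes from. Your proposal elides this, which is forgivable in an outline, but the Agmon step is the critical missing idea.
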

\medskip
We will now prove Proposition \ref{LAP}.
The proof is a modified version of the arguments in
\cite[Theorem 3.1]{Su2} and \cite[Theorem 4.1]{RS2} and
it may include the repetition of them.
The following lemma
due to \cite[Proposition 3.3]{RS2} (and its proof) is essential:
\medskip
\begin{lem}\label{basiclem}
Let $m\in\R$, let $\varepsilon>0$, and
let $\sigma(x,\xi)\in \mathcal{A}^m_1$.
Suppose that $\sigma(x,\xi)=0$ if
$(x,\xi)\in\Gamma$ or $|\xi|<\varepsilon$.
Then we have
\begin{align*}
&\n{\sigma(X,D)u}_{L^2(\R^n)}
\\
\leq &C
\p{
\sup_{\substack{x,\xi\in\R^n\\ |\alpha|+|\gamma|\leq N}}
\abs{
\frac{\partial_x^\alpha\partial_\xi^\gamma\sigma(x,\xi)}
{\jp{x}^{m-|\alpha|}\jp{\xi}^{1-|\gamma|}}
}
}
\p{
\sum_{i<j}\n{\Omega_{ij}(X,D)u}_{L^2_{m-1}(\R^n)}
+\n{u}_{L^2_{m-1}(\R^n)}
},
\end{align*}
with a sufficiently large $N\in\N$.
\end{lem}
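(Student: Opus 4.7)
The plan is to exploit the vanishing of $\sigma$ on $\Gamma$ in order to factor the symbol through the components $\Omega_{ij}$, and then to use pseudo-differential calculus to move the $\Omega_{ij}(X,D)$ factors onto $u$. Concretely, I would first produce a \emph{symbolic factorisation}
\[
\sigma(x,\xi) = \sum_{i<j} r_{ij}(x,\xi)\,\Omega_{ij}(x,\xi), \qquad r_{ij}\in\mathcal{A}^{m-1}_0,
\]
valid on $\b{|\xi|>\varepsilon/2}$ (which is all that is needed thanks to the support assumption on $\sigma$), with seminorms of $r_{ij}$ controlled by those of $\sigma$ in $\mathcal{A}^m_1$.

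To produce this factorisation, recall from \eqref{Omega} that the vanishing set $\b{\Omega=0}$ coincides with $\Gamma$, and that under the change of variables $y=x\psi'(\xi)^{-1}$ the condition $\Omega=0$ becomes $y\wedge\psi(\xi)=0$, i.e.~$y\parallel\psi(\xi)$. In these new coordinates $\sigma$ becomes a smooth function of $(y,\psi)$ vanishing whenever $y$ is parallel to $\psi$; a standard Hadamard-type lemma then allows one to divide by the antisymmetric expressions $y_i\psi_j-y_j\psi_i$, yielding smooth coefficients. Reverting to the original coordinates via the chain rule produces $r_{ij}\in\mathcal{A}^{m-1}_0$, and the smoothness of $\psi$ and $\psi'$ on $\b{|\xi|\geq\varepsilon}$ gives the quantitative control of the $\mathcal{A}^{m-1}_0$ seminorms of $r_{ij}$ by a finite number of $\mathcal{A}^m_1$ seminorms of $\sigma$.

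Second, I would apply the Kumano-go composition formula to each factored piece to obtain
\[
\sigma(X,D)u = \sum_{i<j} r_{ij}(X,D)\,\Omega_{ij}(X,D)u + R(X,D)u,
\]
where the asymptotic expansion identifies the remainder symbol as lying in $\mathcal{A}^{m-1}_0$: each first-order correction $\partial_\xi r_{ij}\cdot D_x\Omega_{ij}$ loses one power of $\jp{\xi}$ from $\partial_\xi r_{ij}\in\mathcal{A}^{m-1}_{-1}$ and gains nothing above $\jp{x}^{0}$ from $D_x\Omega_{ij}\in\mathcal{A}^{0}_{1}$, and the higher-order terms are even better. Weighted $L^2$-boundedness of operators with symbols in $\mathcal{A}^{m-1}_0$ (from $L^2_{m-1}$ into $L^2$, via Calderon-Vaillancourt applied to $\jp{x}^{-(m-1)}r_{ij}(x,\xi)$ and similarly for $R$) then gives $\n{R(X,D)u}_{L^2}\leq C\n{u}_{L^2_{m-1}}$ and
\[
\n{r_{ij}(X,D)\,\Omega_{ij}(X,D)u}_{L^2}\leq C\n{\Omega_{ij}(X,D)u}_{L^2_{m-1}},
\]
with constants bounded by finitely many seminorms of $r_{ij}$ and hence of $\sigma$. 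Summing over $i<j$ yields the desired estimate.

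The principal obstacle is the Hadamard-type factorisation: one must produce smooth $r_{ij}$ with quantitative control of their symbol seminorms by those of $\sigma$, uniformly on $\b{|\xi|\geq\varepsilon}$. The $\binom{n}{2}$ functions $\Omega_{ij}$ are not functionally independent (the antisymmetric matrix $\Omega$ has rank at most $n-1$), so the decomposition is non-unique, but it still exists because the $\Omega_{ij}$ generate the ideal of smooth functions vanishing on $\Gamma$; the over-determination is harmless for the argument. The cut-off $|\xi|\geq\varepsilon$ is essential because $\psi$ is only a diffeomorphism on $\R^n\setminus 0$, which is precisely why that hypothesis is imposed in the lemma.
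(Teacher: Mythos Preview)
The paper does not prove this lemma in the text; it simply attributes the result to \cite[Proposition 3.3]{RS2} and its proof. Your proposal---Hadamard-type factorisation $\sigma=\sum r_{ij}\Omega_{ij}$ through the vanishing on $\Gamma$, then operator composition to extract $\Omega_{ij}(X,D)$ acting on $u$, with a remainder in $\mathcal{A}^{m-1}_0$---is precisely the natural argument and is correct; this is almost certainly what \cite{RS2} does as well. Two small remarks. First, since $\Omega_{ij}(x,\xi)$ is \emph{linear} in $x$, the composition expansion $r_{ij}\#\Omega_{ij}$ actually terminates after the first-order term, so your remainder is exact rather than asymptotic. Second, the step ``weighted $L^2$-boundedness of operators with symbols in $\mathcal{A}^{m-1}_0$ from $L^2_{m-1}$ to $L^2$'' requires slightly more than raw Calder\'on--Vaillancourt when $m-1<0$ (which is the case in the application, where $m=-1/2+l$): one needs that operators with symbols in $\mathcal{A}^0_0$ are bounded on \emph{every} $L^2_s$, not just on $L^2$. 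This is standard SG-calculus, and in the paper it is exactly the content of the cited \cite[Theorem 3.1]{RS1}, \cite[Theorem 2.1]{RS2}; you should invoke that rather than plain Calder\'on--Vaillancourt. (Incidentally, the antisymmetric matrix $y\wedge\psi$ has rank at most $2$, not $n-1$, but your point about non-uniqueness of the factorisation is unaffected.)
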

\medskip
We set
\[
K_{\lambda,\chi}=
R(\lambda\pm i0)
\chi\p{D}.
\]
By Lemma \ref{basiclem} and by taking the adjoint, the estimate 
of Proposition \ref{LAP} is reduced to
show the
$L^2(\R^n)$-boundedness of the operator
\[
\widetilde{K}_{\lambda,\chi}
=\jp{x}^{-3/2+l}\p{\Omega_{ij}}^k
 K_{\lambda,\chi}
\jp{x}^{-1/2-l},
\]
for $0<l<1$, and
\[
\widetilde{K}_{\lambda,\chi}
=\jp{x}^{-3/2+l}\p{\Omega_{ij}}^k
 K_{\lambda,\chi}
\p{\Omega_{i'j'}^*}^{k'}\jp{x}^{-3/2-l},
\]
for $-1<l<1$,
where $\Omega_{i'j'}^*$ is the adjoint of $\Omega_{i'j'}$,
and $k,k'=0,1$.

Here we have also used the boundedness of $\tau(X,D)$ from
$L^2_{-1/2-l}$ to $L^2$, which is justified by
\cite[Theorem 3.1]{RS1}, \cite[Theorem 2.1]{RS2}.
In any case, since $\Omega_{ij}$ almost commutes
with $K_{\lambda,\chi}$ by Lemma \ref{Com},
$\widetilde{K}_{\lambda,\chi}$ has the expressions
\begin{align*}
\widetilde{K}_{\lambda,\chi}
=&\sum_{\nu:finite}
  \jp{x}^{-3/2+l}R(\lambda\pm i0)\chi_\nu(D)f_\nu(x)\jp{x}^{-3/2-l}
\\
=&\sum_{\nu:finite}
    \jp{x}^{-3/2+l}\tilde{f}_\nu(x)R(\lambda\pm i0)
    \tilde{\chi}_\nu(D)\jp{x}^{-3/2-l}
\notag
\end{align*}
where $f_\nu,\tilde{f}_\nu$ are functions of polynomial growth of
order $2$ at most, and
$\chi_\nu,\tilde{\chi}_\nu\in C_0^\infty$ have their support in
that of $\chi$.
Hence we may assume
\begin{equation}\label{T}
\widetilde{K}_{\lambda,\chi}
=\jp{x}^{-3/2+l}K_{\lambda,\chi}\jp{x}^{1/2-l},
\qquad
\widetilde{K}_{\lambda,\chi}
=\jp{x}^{1/2+l}K_{\lambda,\chi}\jp{x}^{-3/2-l},
\end{equation}
whichever we need to show the $L^2$ boundedness for $-1<l<1$.
\par
We may assume, as well, that $\chi(\xi)\in C^\infty_0(\R^n)$
has its support in a sufficiently small
conic neighbourhood of $(0,\ldots,0,1)$.
We split the variables in $\R^n$ in the way of
\[
x=(x',x_n),\, x'=(x_1,\ldots,x_{n-1}).
\]
By the integral kernel representation,
we express the operators
$\widetilde{K}_{\lambda,\chi}$ and $K_{\lambda,\chi}$ as
\begin{align*}
&\widetilde{K}_{\lambda,\chi}v(x)=\int \widetilde{K}_{\lambda,\chi}(x,y)v(y)
\,dy
=\int\,dy_n\cdot\int \widetilde{K}_{\lambda,\chi}(x,y)v(y)\,dy',
\\
&K_{\lambda,\chi} v(x)=\int K_{\lambda,\chi}(x,y)v(y)\,dy
=\int\,dy_n\cdot\int K_{\lambda,\chi}(x,y)v(y)\,dy'.
\end{align*}
The following is fundamental in the proof of the limiting absorption principle
(see \cite[Lemma 4.1]{RS2}):
\medskip
\begin{lem}\label{Res}
Let $\chi(\xi)\in C_0^\infty(\R^n)$ have its support in a small
conic neighbourhood of $(0,\ldots,0,1)$.
Then we have
\[
   \n{\int K_{\lambda,\chi}(x,y)v(y)\,dy'}_{L^2(\R_{x'}^{n-1})}
  \leq C_{\lambda,\chi}
  \n{v(\cdot,y_n)}_{L^2(\R^{n-1})},
\]
where $C_{\lambda,\chi}$ is independent of $x_n$ and $y_n$.
\end{lem}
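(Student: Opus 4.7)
Since $K_{\lambda,\chi}(x,y)$ is the kernel of a Fourier multiplier, i.e.\
\[
K_{\lambda,\chi}(x,y)=\p{2\pi}^{-n}\int_{\R^n} e^{i(x-y)\cdot\xi}\,
\frac{\chi(\xi)}{p(\xi)^2-\lambda\mp i0}\,d\xi,
\]
the operator $w(y')\mapsto \int K_{\lambda,\chi}\p{(x',x_n),(y',y_n)}w(y')\,dy'$ is translation-invariant in $x'$, with Fourier multiplier symbol (in $x'$) given by
\[
m(\xi',t)=\p{2\pi}^{-1}\int_{\R} e^{it\xi_n}\,
\frac{\chi(\xi',\xi_n)}{p(\xi',\xi_n)^2-\lambda\mp i0}\,d\xi_n,
\qquad t=x_n-y_n.
\]
Plancherel's theorem in $x'$ therefore reduces the desired estimate to the pointwise bound $\sup_{\xi'}\abs{m(\xi',t)}\le C_{\lambda,\chi}$, uniformly in $t\in\R$ (equivalently in $x_n,y_n$).

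Next I exploit the narrow conic support of $\chi$ around $e_n=(0,\ldots,0,1)$. On $\supp\chi$ the component $\xi_n$ is bounded between positive constants, and Euler's identity $\xi\cdot\nabla p(\xi)=p(\xi)$ combined with $p(e_n)>0$ forces $\partial_{\xi_n}p\ge c>0$ on $\supp\chi$ provided the cone is small enough. Hence $\xi_n\mapsto p(\xi',\xi_n)^2$ is smooth and strictly monotone on each $\xi'$-slice, and $p(\xi',\xi_n)^2=\lambda$ has at most one solution $\xi_n=a(\xi')$, depending smoothly on $\xi'$ by the implicit function theorem whenever it exists.

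If $\lambda$ lies outside the range of $p^2$ on $\supp\chi$ (in particular for $\lambda\le 0$), the integrand in $m$ is smooth and compactly supported in $\xi_n$, so $\abs{m(\xi',t)}$ is bounded by the $L^1$-norm of $\chi/(p^2-\lambda)$, uniformly in $(\xi',t)$. Otherwise, I apply the Plemelj--Sokhotski decomposition
\[
\frac{1}{p^2-\lambda\mp i0}=\mathrm{p.v.}\,\frac{1}{p^2-\lambda}\pm i\pi\,\delta(p^2-\lambda).
\]
The delta contribution to $m(\xi',t)$ evaluates to $\pm i\pi\,e^{ita(\xi')}\,\chi(\xi',a(\xi'))/\abs{\partial_{\xi_n}(p^2)(\xi',a(\xi'))}$, which is manifestly bounded uniformly. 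For the principal-value part I localise near $\xi_n=a(\xi')$ and change variable $\eta=p(\xi',\xi_n)^2-\lambda$; the contribution becomes a smooth $\xi'$-dependent perturbation of the classical identity $\mathcal F^{-1}\p{\mathrm{p.v.}\,1/\eta}(t)=-i\pi\operatorname{sgn}(t)$, hence bounded in $t$. The complementary piece of the $\xi_n$-integral is an $L^1$-bounded oscillatory integral with a smooth compactly supported amplitude.

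The main obstacle is uniformity in $\xi'$: the implicit function construction of $a(\xi')$, the change of variables, and the Plemelj--Sokhotski split must all be controlled by derivatives of $p$ and $\chi$ over the compact projection of $\supp\chi$ onto $\R^{n-1}_{\xi'}$. The key input guaranteeing this is the uniform lower bound on $\partial_{\xi_n}p$ on $\supp\chi$, obtained from the narrow cone choice; with this in hand all constants appearing above are uniform in $\xi'$, which by the Plancherel reduction in the first paragraph yields the lemma.
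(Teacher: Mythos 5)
Your Plancherel reduction in $x'$ to a uniform pointwise bound on the 1D multiplier $m(\xi',t)$ is the right strategy, and it is in the spirit of what the paper does (the paper simply defers to \cite[Lemma 4.1]{RS2}). One step, however, is imprecise enough to deserve a repair: after substituting $\eta=p(\xi',\xi_n)^2-\lambda$, the phase $e^{it\xi_n}$ becomes $e^{it\phi(\eta,\xi')}$ with $\phi$ \emph{nonlinear} in $\eta$, and
\[
\mathrm{p.v.}\int e^{it\phi(\eta)}\,\frac{g(\eta)}{\eta}\,d\eta
\]
is not literally a small perturbation of $\mathcal F^{-1}\p{\mathrm{p.v.}\,1/\eta}(t)$: if you try to Taylor-expand $h(\eta)=e^{it\phi(\eta)}g(\eta)$ around $\eta=0$, the derivative $h'$ carries a factor of $|t|$, so the naive estimate does not give a bound uniform in $t$. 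The clean fix is to substitute $\xi_n=a(\xi')+s$ instead of $\eta=p^2-\lambda$. Using your lower bound $\partial_{\xi_n}p\ge c>0$, factor $p(\xi',\xi_n)^2-\lambda=(\xi_n-a(\xi'))\,q(\xi',\xi_n)$ with $q$ smooth, nonvanishing, and uniformly bounded above and below on $\supp\chi$. Then
\[
m(\xi',t)=(2\pi)^{-1}e^{ita(\xi')}\,
\mathrm{p.v.}\!\int e^{its}\,\frac{\tilde g(\xi',s)}{s\mp i0}\,ds,
\qquad
\tilde g(\xi',s)=\frac{\chi\p{\xi',a(\xi')+s}}{q\p{\xi',a(\xi')+s}},
\]
the unimodular prefactor drops out, and the remaining integral is bounded uniformly in $t$ and $\xi'$ by the classical Hilbert-kernel computation (write $\tilde g(\xi',s)=\tilde g(\xi',0)+s\,k(\xi',s)$, use $\mathrm{p.v.}\!\int_{-A}^{A}e^{its}\,ds/s$ bounded and $\|k(\xi',\cdot)\|_{L^1}$ bounded by compactness). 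With this modification your argument is complete; the remaining ingredients (nonvanishing of $\partial_{\xi_n}p$ from Euler's identity, the uniqueness of the root $a(\xi')$, and uniformity in $\xi'$ from compactness of the $\xi'$-projection of $\supp\chi$) are all in order.
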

\medskip
By (\ref{T}) and Lemma \ref{Res}, we have easily
\[
   \n{\int\jp{x}^{3/2-l} \widetilde{K}_{\lambda,\chi}(x,y)
     \jp{y}^{-1/2+l}v(y)\,dy'}_{L^2(\R_{x'}^{n-1})}
  \leq C_{\lambda,\chi}
  {\n{v(\cdot,y_n)}_{L^2(\R^{n-1})}}
\]
and
\[
   \n{\int\jp{x}^{-1/2-l} \widetilde{K}_{\lambda,\chi}(x,y)
     \jp{y}^{3/2+l}v(y)\,dy'}_{L^2(\R_{x'}^{n-1})}
  \leq C_{\lambda,\chi}
  {\n{v(\cdot,y_n)}_{L^2(\R^{n-1})}}.
\]
By interpolation, we have
\[
   \n{\int\jp{x}^{1/2\pm\epsilon} \widetilde{K}_{\lambda,\chi}(x,y)
     \jp{y}^{1/2\mp\epsilon}v(y)\,dy'}_{L^2(\R_{x'}^{n-1})}
  \leq C_{\lambda,\chi}
  {\n{v(\cdot,y_n)}_{L^2(\R^{n-1})}},
\]
hence
\[
   \n{\int \widetilde{K}_{\lambda,\chi}(x,y)v(y)\,dy'}_{L^2(\R_{x'}^{n-1})}
  \leq C_{\lambda,\chi}
  \frac{\n{v(\cdot,y_n)}_{L^2(\R^{n-1})}}
   {|x_n|^{1/2\pm\epsilon}|y_n|^{1/2\mp\epsilon}}
\]
for $0<\epsilon\leq1-|l|$.
Since $|x_n|^{-2\epsilon}\leq2^{2\epsilon}|x_n-y_n|^{-2\epsilon}$
if $|x_n|\geq|y_n|$ and
$|y_n|^{-2\epsilon}\leq2^{2\epsilon}|x_n-y_n|^{-2\epsilon}$
if $|x_n|\leq|y_n|$,
we have
\[
   \n{\int \widetilde{K}_{\lambda,\chi}(x,y)v(y)\,dy'}_{L^2(\R_{x'}^{n-1})}
  \leq C_{\lambda,\chi}
  \frac{\n{v(\cdot,y_n)}_{L^2(\R^{n-1})}}
   {|x_n|^{1/2-\epsilon}|x_n-y_n|^{2\epsilon}|y_n|^{1/2-\epsilon}}.
\]
If we take $\epsilon$ such that $0<\epsilon<\min\b{1/2,1-|l|}$, then we have
\begin{align*}
\n{\widetilde{K}_{\lambda,\chi}v}_{L^2(\R^n)}
 &\leq
  \n{
     \int\n{\int \widetilde{K}
      _{\lambda,\chi}(x,y)v(y)\,dy'}_{L^2(\R_{x'}^{n-1})}\,dy_n
    }_{L^2(\R_{x_n})}
\\
 &\leq
  C_{\lambda,\chi}
  \n{
     \int
        \frac{\n{v(\cdot,y_n)}_{L^2(\R^{n-1})}}
         {|x_n|^{1/2-\epsilon}|x_n-y_n|^{2\epsilon}|y_n|^{1/2-\epsilon}}
     \,dy_n
    }_{L^2(\R_{x_n})}
\\
 &\leq
  C_{\lambda,\chi}\n{v}_{L^2(\R^n)},
\end{align*}
where we have used
the following fact (with the case $n=1$)
proved by Hardy-Littlewood \cite{HL}:
\medskip
\begin{lem}\label{SW}
Let $\gamma<n/2$, $\delta <n/2$,
$m<n$, and
$\gamma+\delta+m=n$.
Then we have
\begin{align*}
\p{
\int_{\R^n}
\abs{
\abs{x}^{-\gamma}\abs{D}^{m-n}\abs{x}^{-\delta} f(x)
}^2\,dx
}^{1/2}
&=
\p{
\int_{\R^n}
\abs{
\int_{\R^n}
\dfrac{f\p{y}}{\abs{x}^{\gamma}\abs{x-y}^{m}\abs{y}^\delta}
\,dy
}^2\,dx
}^{1/2}
\\
&\le C
\p{ \int_{\R^n}\abs{f(x)}^2\,dx
}^{1/2}.
\end{align*}
\end{lem}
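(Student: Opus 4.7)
The plan is to recognise the integral operator on the right-hand side as a weighted Riesz potential and to reduce the $L^2$-bound to the classical Hardy (Kato--Herbst) inequality. Since $0<n-m<n$, up to a dimensional constant the Fourier multiplier $|D|^{m-n}$ is convolution with $|x-y|^{-m}$, so the quantity to be bounded is $\n{|x|^{-\gamma}\,|D|^{m-n}\p{|y|^{-\delta}f}}_{L^2(\R^n)}$, which matches the left-hand side of the stated equality.

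First I would exploit the constraint $\gamma+\delta=n-m$ to split the Riesz potential on the Fourier side as
\[
|D|^{-(n-m)}=|D|^{-\gamma}\circ|D|^{-\delta},
\]
so that the operator factors as $\p{|x|^{-\gamma}|D|^{-\gamma}}\circ\p{|D|^{-\delta}|y|^{-\delta}}$. Next I would invoke the Hardy inequality $\n{|x|^{-s}u}_{L^2}\le C\n{|D|^s u}_{L^2}$, valid for $0\le s<n/2$; equivalently, $|x|^{-s}|D|^{-s}$ is bounded on $L^2(\R^n)$ in that range, and by duality so is $|D|^{-s}|x|^{-s}$. Applying these two bounds with $s=\gamma$ and $s=\delta$, both strictly below $n/2$ by hypothesis, composes two $L^2$-bounded operators and yields the claim whenever $\gamma,\delta\ge 0$.

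The main technical obstacle is that the hypotheses $\gamma,\delta<n/2$ and $\gamma+\delta=n-m>0$ do not force $\gamma$ and $\delta$ individually to be non-negative; when one of them is negative the direct Hardy step above fails, since $|x|^{-s}$ then grows at infinity and is not covered by Kato--Herbst. The cleanest workaround, due to Stein and Weiss, is to interpolate via the Stein analytic interpolation theorem between the two endpoint assignments $(\gamma,\delta)=(n-m,0)$ and $(\gamma,\delta)=(0,n-m)$, each of which is handled by a single Hardy composed with the unweighted Hardy--Littlewood--Sobolev inequality. This covers the full permissible range $\gamma,\delta<n/2$ with $\gamma+\delta\ge 0$, which includes in particular the one-dimensional case $\gamma=\delta=1/2-\varepsilon$, $m=2\varepsilon$ actually used in the argument preceding the lemma.
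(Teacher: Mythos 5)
The paper gives no proof of Lemma \ref{SW}; it is the classical weighted $L^2$ fractional-integration inequality, attributed to Hardy--Littlewood \cite{HL} (see also Stein--Weiss \cite{SW}). Your argument is therefore an independent one, and for $\gamma,\delta\ge0$ it is correct: the factorisation $|D|^{m-n}=|D|^{-\gamma}|D|^{-\delta}$ together with Hardy's inequality $\n{|x|^{-s}u}_{L^2}\le C\n{|D|^{s}u}_{L^2}$ for $0\le s<n/2$, applied once to $|x|^{-\gamma}|D|^{-\gamma}$ and once by duality to $|D|^{-\delta}|x|^{-\delta}$, gives the bound. (A small inaccuracy: at the endpoint $(\gamma,\delta)=(n-m,0)$ a single Hardy estimate already suffices; no Hardy--Littlewood--Sobolev step is involved, since HLS does not act within $L^2$.)

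The interpolation step, however, does not close the gap it is meant to close. Stein interpolation between the assignments $(\gamma,\delta)=(n-m,0)$ and $(\gamma,\delta)=(0,n-m)$ produces the segment $\gamma=\theta(n-m)$, $\delta=(1-\theta)(n-m)$, $0\le\theta\le1$, i.e.\ exactly the set $\gamma,\delta\in[0,n-m]$ where both exponents are nonnegative --- the case you had already settled by direct composition. When $m>n/2$ the admissible range $n/2-m<\gamma<n/2$ (with $\delta=n-m-\gamma$) is strictly larger than $[0,n-m]$ and includes negative $\gamma$ or negative $\delta$, and this regime is \emph{not} reached by your interpolation family. Nor is it vacuous in this paper: in the proof of Theorem \ref{RE}, Lemma \ref{lemma2} is invoked with $b=\varepsilon$ and $\delta=1/2+2\varepsilon$, which feeds into Lemma \ref{SW} with $\gamma=1/2+2\varepsilon>0$, $m=n-\varepsilon>n/2$, and $\delta=-1/2-\varepsilon<0$. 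One cannot repair this by moving the endpoints toward $n/2$, since when $m>n/2$ pushing $\gamma$ up toward $n/2$ forces $\delta=n-m-\gamma$ negative, so the endpoint bound is precisely what one is trying to prove. The standard fix is to bypass Hardy altogether and run a Schur test against the comparison weight $|x|^{-a}$ with $a$ chosen in the interval $\bigl(n-m-\min(\gamma,\delta),\,n-\max(\gamma,\delta)\bigr)$, which is nonempty exactly under the hypotheses $\gamma,\delta<n/2$, $\gamma+\delta+m=n$; the relevant kernel integrals reduce by homogeneity to convergent beta-type integrals, and both signs of $\gamma$ and $\delta$ are handled uniformly. This is essentially the route in the cited sources.
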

\medskip
\noindent
(See also \cite[Theorem B]{SW}.)
Thus we have completed the proof of Proposition \ref{LAP}.
\par

\medskip
\par
\section{Uniform resolvent estimates}\label{Section3}
In this section we derive uniform resolvent estimates.
We use the notation
\begin{equation}\label{sigma'}
\sigma(x,\xi)\sim |\xi|^b\qquad (b\in\R)
\end{equation}
in the sense that it satisfies
$\sigma(x,\xi)\in C^\infty\p{\R^n_x\times(\R^n_{\xi}\setminus0)}$
and 
\[
\sigma(x,\lambda\xi)=\lambda^b\sigma(x,\xi)
\,;\,(\lambda>0,\,\xi\neq0),
\quad
\abs{\partial^\alpha_x \sigma(x,\xi)}
\leq C_\alpha\jp{x}^{-|\alpha|}|\xi|^b.
\]
If $b=1$, we write $\sigma(x,\xi)\sim |\xi|$.
Then Proposition \ref{LAP} given in Section \ref{Section2} induces
the following uniform resolvent estimate:
\medskip
\begin{thm}\label{RE}
Let $n\ge2$ and let $\theta\in\R$.
Suppose that $\sigma(x,\xi)\sim|\xi|^{\theta}$,
$\tau(x,\xi)\sim|\xi|^{1-\theta}$,
and $\sigma(x,\xi)=0$ on $\Gamma$.
Then we have the estimate
\begin{equation}\label{eq:res3}
\sup_{\lambda\in\R}
\n{
\sigma(X,D)
R(\lambda\pm i0)\tau(X,D)^*
v}_{L^2_{-1/2+l}(\R^n)}
\le
C\n{v}_{L^2_{1/2+l}(\R^n)}
\end{equation}
$0<l<\min\b{1,(n-1)/2}$.
Suppose also that $\tau(x,\xi)=0$ on $\Gamma$.
Then estimate \eqref{eq:res3} is true for
$|l|<\min\b{1,(n-1)/2}$.
\end{thm}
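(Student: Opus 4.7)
The plan is to upgrade Proposition \ref{LAP} in two stages: a scaling argument removes the $\lambda$-dependence of the constant, and a frequency splitting into resonant and off-resonant pieces removes the cutoff $\chi(D)$.

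For the scaling step, since $L = p(D)^2$ is homogeneous of degree two, the unitary dilation $U_\mu f(x) = \mu^{n/2} f(\mu x)$ conjugates $R(\lambda)$ to $\mu^{-2}R(\lambda/\mu^2)$, while the homogeneities $\sigma \sim |\xi|^\theta$ and $\tau \sim |\xi|^{1-\theta}$ give $U_\mu^{-1}\sigma(X,D)U_\mu = \mu^\theta \sigma(X/\mu, D)$ and $U_\mu^{-1}\tau(X,D)^* U_\mu = \mu^{1-\theta}\tau(X/\mu, D)^*$. The rescaled symbols still vanish on $\Gamma$ (which is invariant under $(x,\xi)\mapsto(x/\mu,\xi)$) and for $\mu \geq 1$ have uniformly bounded class seminorms. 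Working momentarily with homogeneous weights $|x|^a$—the passage to $\jp{x}^a$ being routine since the two weights agree for $|x|\geq 1$—a direct computation using $\||x|^a U_\mu f\|_{L^2} = \mu^{-a}\||x|^a f\|_{L^2}$ shows that $\|\sigma(X,D)R(\lambda)\tau(X,D)^*\|_{L^2_{1/2+l}\to L^2_{-1/2+l}}$ is scale-invariant under $\lambda \mapsto \lambda/\mu^2$. Choosing $\mu=|\lambda|^{1/2}$ thus reduces the case $|\lambda|\geq 1$ to $\lambda = \pm 1$, and the remaining bounded range $|\lambda| \leq 1$ is covered by continuity of the constant in Proposition \ref{LAP} on this compact parameter set.

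Next I would fix $\lambda\in\{\pm 1\}$ and select a cutoff $\chi_0\in C_0^\infty(\mathbb{R}^n\setminus 0)$ that equals $1$ on a neighbourhood of the energy shell $\{p(\xi)^2 = |\lambda|\}$ (empty for $\lambda = -1$). The resonant contribution $\sigma(X,D)\chi_0(D)R(\lambda\pm i0)\tau(X,D)^*$ falls into the scope of Proposition \ref{LAP}: the compositions $\jp{X}^{-1/2+l}\sigma(X,D)\chi_0(D)$ and $\jp{X}^{-1/2-l}\tau(X,D)\chi_0(D)$ have symbols in $\mathcal{A}^{-1/2+l}$ and $\mathcal{A}^{-1/2-l}$ respectively, still vanishing on $\Gamma$, and Proposition \ref{LAP} yields the bound in the range $|l|<1$ (or $0<l<1$ when only $\sigma$ vanishes on $\Gamma$). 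The off-resonant contribution involves the multiplier $(1-\chi_0(\xi))/(p(\xi)^2-\lambda)$, which is smooth, bounded, and decays like $|\xi|^{-2}$; by symbolic calculus the whole composition is a classical pseudodifferential operator of order $\theta+(1-\theta)-2=-1$ in $\xi$, whose kernel is (modulo bounded smooth factors) the Riesz-type kernel $|x-y|^{-(n-1)}$. The desired $L^2_{1/2+l}\to L^2_{-1/2+l}$ bound thus reduces to the boundedness of $|x|^{-(1/2-l)}|D|^{-1}|x|^{-(1/2+l)}$ on $L^2$, which is exactly Lemma \ref{SW} with $\gamma=1/2-l$, $\delta=1/2+l$, $m=n-1$, and is valid precisely when $|l|<(n-1)/2$.

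The main obstacle is verifying the symbolic calculus cleanly in the class of symbols with merely bounded (not decaying) behaviour in $x$ that arises from $\sigma\sim|\xi|^\theta$; this is not a standard H\"ormander class, and one must instead work in the weighted pseudodifferential calculus adapted to the decompositions of Proposition \ref{LAP}, as developed in \cite{Su1}, \cite{Su2}, \cite{RS2}. Once that is set up, intersecting the resonant range $|l|<1$ (respectively $0<l<1$) with the Hardy--Littlewood--Sobolev range $|l|<(n-1)/2$ yields the claimed windows $|l|<\min\{1,(n-1)/2\}$ and $0<l<\min\{1,(n-1)/2\}$. A subordinate bookkeeping issue is to ensure the rescaling applies uniformly to all symbols in the class rather than a single fixed pair, which follows from tracking the seminorms through the conjugation $U_\mu^{-1}\sigma(X,D)U_\mu$ and noting that for $\mu\geq 1$ no seminorm blows up.
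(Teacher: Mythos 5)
Your high-level route --- scaling to a fixed energy, then a resonant/off-resonant split with the resonant piece fed to Proposition \ref{LAP} and the off-resonant piece to a Stein--Weiss bound --- matches the paper's, but the two steps you declare routine are exactly where the work lies, and one step is incorrect. The claim that the range $|\lambda|\leq 1$ is covered by ``continuity of the constant in Proposition \ref{LAP}'' fails on several counts: Proposition \ref{LAP} only controls the cutoff operator $\sigma(X,D)R(\lambda\pm i0)\chi(D)\tau(X,D)^*$, not the full resolvent; a fixed $\chi\in C_0^\infty(\R^n\setminus 0)$ cannot follow the energy shell $\{p(\xi)^2=\lambda\}$ as it collapses to the origin when $\lambda\to 0^+$; and $\lambda=0$ gives the zero-energy resolvent $p(D)^{-2}$, which is not reached by any continuity argument and which the paper treats separately as the $L^2$-boundedness of $\jp{X}^{-1/2+l}\sigma(X,D)p(D)^{-2}\tau(X,D)^*\jp{Y}^{-1/2-l}$ via Lemma \ref{lemma2}. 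The paper's scaling argument therefore must cover \emph{all} $\lambda\neq 0$, not only $|\lambda|\geq 1$, and this forces the rescaled weights $\jp{\lambda^{-1}x}^{-1/2\pm l}$ into the picture; controlling those uniformly requires comparing them with homogeneous $|x|^{-1/2\pm l}$ or $|x|^{-1/2\pm l'}$ separately on $|\lambda^{-1}x|\leq 2$ and $|\lambda^{-1}x|\geq 2$ with $\lambda$-powers that must cancel across the two slots, a genuine piece of bookkeeping and not a routine passage between $|x|^a$ and $\jp{x}^a$.

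A consequence you miss is that the rescaled symbols live in a purely homogeneous class (the bounds \eqref{sigtauest}), not in $\mathcal{A}^{-1/2+l}$, so Proposition \ref{LAP} cannot be applied directly to the resonant piece. The paper further decomposes with a cutoff $\rho(x)$ near the spatial origin; the pieces where $|x|^{-1/2\pm l}$ is singular are handled by Lemma \ref{SuTsu} (Sugimoto--Tsujimoto) together with Lemma \ref{lemma1} (Kurtz--Wheeden), while only the piece supported away from $x=0$ and from $\xi=0$ goes to Proposition \ref{LAP}. Similarly, your off-resonant argument applies Lemma \ref{SW} with $m=n-1$ to a symbol containing the factor $(1-\chi_0(\xi))/(p(\xi)^2-\lambda)$, which is not exactly positively homogeneous, so neither Lemma \ref{SW} nor Lemma \ref{lemma2} applies directly. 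The paper circumvents this by factoring into exactly homogeneous operators $\tilde\sigma_\lambda,\tilde\tau_\lambda$ of degree $-1/2$ in $\xi$ (where Lemma \ref{lemma2} does apply, with $b=1/2$) composed with a Mikhlin-type multiplier $|X|^l|D|^2R(\pm 1\pm i0)(1-\chi\circ p)(D)|Y|^{-l}$ handled by Lemma \ref{lemma1}. These are not cosmetic details: they are precisely where the constraint $|l|<(n-1)/2$ and the uniformity in $\lambda$ actually come from.
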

\medskip
As a special case of Theorem \ref{RE}, we have
\medskip
\begin{cor}\label{REcor}
Let $n\ge2$.
Suppose that $\sigma(x,\xi)\sim|\xi|^{1/2}$ and
$\sigma(x,\xi)=0$ on $\Gamma$.
Then we have
\begin{equation}\label{eq:res2}
\sup_{\lambda\in\R}
\n{
\sigma(X,D)
R(\lambda\pm i0)\sigma(X,D)^*
v}_{L^2_{-1/2}(\R^n)}
\le
C\n{v}_{L^2_{1/2}(\R^n)}.
\end{equation}
Suppose that $\sigma(x,\xi)\sim|\xi|$ and
$\sigma(x,\xi)=0$ on $\Gamma$.
Then we have
\begin{equation}\label{eq:res1}
\sup_{\lambda\in\R}
\n{
\sigma(X,D)
R(\lambda\pm i0)v}_{L^2_{-1/2+l}(\R^n)}
\le
C\n{v}_{L^2_{1/2+l}(\R^n)}
\end{equation}
for $0<l<1$ if $n\geq3$ and $0<l<1/2$ if $n=2$.
\end{cor}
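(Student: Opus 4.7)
The plan is to deduce Corollary \ref{REcor} directly from Theorem \ref{RE} by selecting the parameter $\theta$ and the auxiliary symbol $\tau$ appropriately in each of the two cases, and then checking that the desired value of $l$ falls within the allowed range.

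For estimate \eqref{eq:res2}, I would set $\theta=1/2$ and take $\tau(x,\xi)=\sigma(x,\xi)$, so that $\sigma\sim|\xi|^{1/2}$ and $\tau\sim|\xi|^{1/2}=|\xi|^{1-\theta}$. Both symbols then vanish on $\Gamma$ by hypothesis, so the second part of Theorem \ref{RE} applies and permits $|l|<\min\b{1,(n-1)/2}$. Since $n\ge 2$ guarantees $(n-1)/2>0$, the choice $l=0$ is allowed, and specialising \eqref{eq:res3} to $l=0$ yields exactly \eqref{eq:res2}.

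For estimate \eqref{eq:res1}, I would set $\theta=1$ and take $\tau(x,\xi)\equiv 1$, so that $\sigma\sim|\xi|$ and $\tau\sim|\xi|^{0}$. The operator $\tau(X,D)^{*}$ is then the identity, so the factor $\tau(X,D)^{*}v$ on the right-hand side of \eqref{eq:res3} collapses to $v$, and the left-hand side matches that of \eqref{eq:res1}. Since only $\sigma$ is assumed to vanish on $\Gamma$, we invoke the first part of Theorem \ref{RE}, whose admissible range is $0<l<\min\b{1,(n-1)/2}$. This gives $0<l<1$ when $n\ge 3$ and $0<l<1/2$ when $n=2$, exactly as stated.

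The entire content is thus a matching of parameters; the analytic work has already been carried out in Proposition \ref{LAP} and Theorem \ref{RE}. The only point left to verify is that the chosen $\tau$ satisfies \eqref{sigma'}, which is immediate in both cases: in the first case $\tau=\sigma$ inherits the property from the hypothesis on $\sigma$, while in the second case $\tau\equiv 1$ is a smooth positively homogeneous symbol of degree $0$ whose $x$-derivatives all vanish identically. Accordingly no substantive obstacle arises at this stage beyond ensuring that the endpoint constraints dictated by the dimension $n$ are transcribed correctly into the statement.
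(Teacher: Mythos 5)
Your proposal is correct and follows essentially the same route as the paper, which simply states that Corollary \ref{REcor} is a special case of Theorem \ref{RE}: for \eqref{eq:res2} you take $\theta=1/2$, $\tau=\sigma$, and $l=0$ in the second clause, while for \eqref{eq:res1} you take $\theta=1$, $\tau\equiv 1$ (a valid symbol of class $\sim|\xi|^{0}$ whose quantisation is the identity) in the first clause, and the admissible range $0<l<\min\{1,(n-1)/2\}$ reduces to $0<l<1$ for $n\geq 3$ and $0<l<1/2$ for $n=2$, exactly matching the statement.
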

\medskip
We now prove Theorem \ref{RE}.
We may consider only the case of non-negative $l$ 
since the estimate for
negative $l$ is also given by the duality argument.
That is, it suffices to show estimate \eqref{eq:res3} 
for $0<l<\min\b{1,(n-1)/2}$
assuming that $\sigma(x,\xi)=0$ on $\Gamma$, and for $l=0$
assuming that $\sigma(x,\xi)=\tau(x,\xi)=0$ on $\Gamma$.
We split estimate \eqref{eq:res3} into the following two estimates:
\begin{equation}\label{eq:res2-1}
\sup_{\lambda\neq0}
\n{
\sigma(X,D)
R(\lambda\pm i0)\tau(X,D)^*
v}_{L^2_{-1/2+l}(\R^n)}
\le
C\n{v}_{L^2_{1/2+l}(\R^n)},
\end{equation}
\begin{equation}\label{eq:res2-2}
\n{
\sigma(X,D)
R(0\pm i0)
\tau(X,D)^*
v}_{L^2_{-1/2+l}(\R^n)}
\le
C\n{v}_{L^2_{1/2+l}(\R^n)}.
\end{equation}

The proof of estimate \eqref{eq:res2-2} is reduced to
show the $L^2$-boundedness of the operator
$A(X,D,Y)=\jp{X}^{-1/2+l}\sigma(X,D)p(D)^{-2}\tau(X,D)^*\jp{Y}^{-1/2-l}$.
Since $\jp{x}^{-1/2+l}\leq C\p{|x|^{-1/2}+|x|^{-1/2+l}}$ and
$\jp{x}^{-1/2-l}\leq\min\b{|x|^{-1/2},|x|^{-1/2-l}}$, it is further
reduced to that of
$A(X,D,Y)=\abs{X}^{-1/2+l}\sigma(X,D)p(D)^{-2}\tau(X,D)^*\abs{Y}^{-1/2-l}$
with $0\leq l<(n-1)/2$, which is obtained from the following lemma
(with $b=1$ and $\delta=1/2-l$):
\medskip
\begin{lem}\label{lemma2}
Let $\delta<n/2$, $0<b<\delta+n/2$.
Suppose that
$A(x,\cdot,y)\in C^\infty\p{\R^n\setminus0}$
and 
\[
A(x,\lambda\xi,y)=\lambda^{-b}A(x,\xi,y)\qquad
(\lambda>0,\,x,y\in\R^n\setminus0).
\]
Then we have
\[
\n{A(X,D,Y)u}_{L^2(\R^n)}
\leq C
\p{
\sup_{\substack{x,y\in\R^n\setminus0\\ |\xi|=1,\,|\gamma|\leq 2n}}
\abs{
|x|^{\delta}\partial_\xi^\gamma A(x,\xi,y)|y|^{b-\delta}
}
}
 \n{u}_{L^2(\R^n)}.
\]
\end{lem}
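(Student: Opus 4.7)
The plan is to realise $A(X,D,Y)$ as an integral operator with an explicit kernel whose pointwise bound places us in the setting of the Stein--Weiss type inequality recorded as Lemma \ref{SW}. Concretely, I aim to prove
\[
|K(x,y)| \le C M \, |x|^{-\delta} |x-y|^{b-n} |y|^{\delta-b},
\]
where $K(x,y) = (2\pi)^{-n}\int e^{i(x-y)\cdot\xi} A(x,\xi,y)\, d\xi$ and $M$ denotes the supremum on the right-hand side of the statement. Once this kernel bound is in hand, Lemma \ref{SW} applied with parameter choices $\gamma\mapsto\delta$, $\delta\mapsto b-\delta$, $m\mapsto n-b$ (so that $\gamma+\delta+m=n$) immediately yields the desired $L^2$-boundedness; the hypotheses $\delta<n/2$ and $0<b<\delta+n/2$ translate precisely into $\gamma<n/2$, $\delta<n/2$, and $0<m<n$ in that lemma.

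To obtain the kernel bound, I would first exploit the homogeneity of $A$ in $\xi$ to factor $A(x,\xi,y)=|\xi|^{-b}a(x,\omega,y)$ with $\omega=\xi/|\xi|$. The assumed bound on $||x|^{\delta}\partial_\xi^\gamma A(x,\xi,y)|y|^{b-\delta}|$ at $|\xi|=1$ for $|\gamma|\le 2n$, via Leibniz's rule applied to $|\xi|^{-b}$ against $a(x,\xi/|\xi|,y)$, translates into $|\partial_\omega^\gamma a(x,\omega,y)|\le CM|x|^{-\delta}|y|^{\delta-b}$ for $|\gamma|\le 2n$. For each fixed $x,y$, the $\xi$-integral defining $K(x,y)$ is then the distributional inverse Fourier transform of the homogeneous distribution $|\xi|^{-b}a(x,\cdot,y)$ of degree $-b$, which, since $0<b<n$, is itself a homogeneous distribution of degree $b-n$ on $\R^n\setminus 0$ of the form $|z|^{b-n}\tilde a(x,z/|z|,y)$ with $z=x-y$. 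The cleanest route to quantify $\tilde a$ in terms of $a$ is an expansion of $a(x,\cdot,y)$ in spherical harmonics combined with the Funk--Hecke/Bochner identity, which maps each $Y_k(\omega)$ to a constant multiple of $|z|^{b-n}Y_k(z/|z|)$ with a constant growing at most polynomially in $k$; the $2n$ derivatives of regularity in $\omega$ force sufficient decay of the spherical-harmonic coefficients of $a$ to make the sum absolutely convergent and give the pointwise bound $|\tilde a(x,\nu,y)|\le CM|x|^{-\delta}|y|^{\delta-b}$ uniformly in $\nu\in S^{n-1}$.

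Combining these two steps yields the claimed kernel estimate, after which Lemma \ref{SW} closes the argument. The main obstacle I expect is the second step: interpreting the Fourier transform of the homogeneous distribution $|\xi|^{-b}a(x,\cdot,y)$ at the level of tempered distributions requires care, and the quantitative control of the angular profile $\tilde a$ by a fixed finite number of $\omega$-derivatives of $a$ must be made uniform in the external parameters $x,y$. The spherical-harmonic approach bypasses these subtleties by diagonalising the Fourier transform: the Funk--Hecke formula furnishes an explicit eigenvalue expression, and the merely polynomial growth of these eigenvalues is easily dominated by the rapid decay of spherical-harmonic coefficients of a $C^{2n}$ function on the sphere, which is the sole reason the exponent $2n$ appears in the statement.
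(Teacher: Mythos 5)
Your proposal is correct in outline and arrives at exactly the same kernel bound $|K(x,y)|\leq CM\,|x|^{-\delta}|x-y|^{b-n}|y|^{\delta-b}$ that the paper establishes, after which the application of Lemma~\ref{SW} with $\gamma=\delta$, $\delta=b-\delta$, $m=n-b$ is the same in both arguments. The difference is entirely in how the kernel bound is reached. The paper's route is considerably more elementary: write $K(x,z,y)=\mathcal F_\xi^{-1}[A(x,\xi,y)](z)$, split the integral with a cutoff $\chi$ near $\xi=0$, bound the low-frequency piece directly (integrability of $|\xi|^{-b}$ near $0$ since $0<b<n$), and on the high-frequency piece integrate by parts with $(-\Delta_\xi)^n$ to produce $|z|^{-2n}$; since each $\xi$-derivative of a $(-b)$-homogeneous symbol gains a power $|\xi|^{-1}$, the integrand $(-\Delta_\xi)^n\bigl(A(1-\chi)\bigr)$ decays like $|\xi|^{-b-2n}$ and is integrable at infinity. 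This yields $\sup_{|z|=1}|K(x,z,y)|\leq C\sup_{|\xi|=1,\,|\gamma|\leq 2n}|\partial_\xi^\gamma A(x,\xi,y)|$, and the homogeneity $K(x,\lambda z,y)=\lambda^{b-n}K(x,z,y)$ then propagates this to all $z$. That is the entire appearance of the exponent $2n$: one needs $n$ applications of $\Delta_\xi$ to beat the dimension, period.

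Your spherical-harmonics/Funk--Hecke route is a valid alternative, and it does explain why the answer is an angular profile times $|z|^{b-n}$. But it carries genuine technical overhead that the integration-by-parts route avoids: you must justify the Fourier transform of a homogeneous distribution term by term, track the polynomial growth of the Bochner--Hecke eigenvalues (a ratio of Gamma functions, roughly $k^{(n-2b)/2}$), account for the dimension $\sim k^{n-2}$ of degree-$k$ harmonics and their $L^\infty$ normalisation, and then show that $C^{2n}$ regularity on the sphere produces enough coefficient decay to beat all of these uniformly in the external parameters $x,y$. None of this is incorrect, but the count of derivatives required is not as transparent as you assert, and you have not actually checked that $2n$ suffices for all $n\geq 2$ and all $b\in(0,n)$. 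The paper's proof makes that count trivially explicit. So the proposal is right in strategy and conclusion, but where it diverges from the paper it chooses the harder of two roads, and the one place you flagged as requiring care (``quantify $\tilde a$ in terms of $a$'') is indeed the point you would need to make rigorous before the argument is complete.
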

\begin{proof}
We have
\[
A(X,D,Y)u(x)=\int K(x,x-y,y)u(y)\,dy,
\]
where
\[
K(x,z,y)=\mathcal F_\xi^{-1}[A(x,\xi,y)](z).
\]
Taking a cutoff function $\chi(\xi)\in C^\infty_0(\R^n)$ such that
$\chi(\xi)=1$ near $\xi=0$, we have
\begin{align*}
K(x,z,y)&=
(2\pi)^{-n}\int e^{i\xi\cdot z}A(x,\xi,y)\chi(\xi)\,d\xi
\\
&+
(2\pi)^{-n}|z|^{-2n}\int e^{i\xi\cdot z}
(-\triangle_\xi)^n\p{A(x,\xi,y)(1-\chi)(\xi)}\,d\xi
\end{align*}
by integration by parts.
Then we have
\[
\sup_{|z|=1}\abs{K(x,z,y)}
\leq C
\sup_{\substack{\xi\neq0\\ |\gamma|\leq 2n}}
|\xi|^{b+|\gamma|}
\abs{
\partial_\xi^\gamma A(x,\xi,y)
}
=C
\sup_{\substack{|\xi|=1\\ |\gamma|\leq 2n}}
\abs{
\partial_\xi^\gamma A(x,\xi,y)
},
\]
hence
\[
\abs{K(x,z,y)}
\leq C
\sup_{\substack{|\xi|=1\\ |\gamma|\leq 2n}}
\abs{
\partial_\xi^\gamma A(x,\xi,y)
}
\,|z|^{-(n-b)}
\]
since
$K(x,\lambda z,y)=\lambda^{b-n}K(x,z,y)$ for $\lambda>0$ and $x,y\in\R^n$.
From this, we obtain
\[
\abs{A(X,D,Y)u(x)} \leq C
\p{
\sup_{\substack{x,y\in\R^n\setminus0 \\ |\xi|=1, |\gamma|\leq 2n}}
\abs{
|x|^{\delta}\partial_\xi^\gamma A(x,\xi,y)|y|^{b-\delta}
}
}
\int\frac{|u(y)|}{|x|^{\delta}|x-y|^{n-b}|y|^{b-\delta}}\,dy,
\]
which implies the result by Lemma \ref{SW}.
\end{proof}
\medskip
We show estimate \eqref{eq:res2-1} by the scaling argument.
Noting that we have generally
\[
a(X,D,Y)f(x)=a(|\lambda|^{-1}X,|\lambda| D,|\lambda|^{-1}Y)
[f(|\lambda|^{-1}\cdot)](|\lambda| x),
\]
estimate \eqref{eq:res2-1} is reduced to showing the estimates
\begin{equation}\label{posres}
\sup_{\lambda>0}
\n{
\sigma_\lambda(X,D)
R(1\pm i0)
\tau_\lambda(X,D)^*
v
}_{L^2\p{\R^n}}
\le
C
\n{v}_{L^2\p{\R^n}},
\end{equation}
\begin{equation}\label{negres}
\sup_{\lambda>0}
\n{
\sigma_\lambda(X,D)
R(-1\pm i0)
\tau_\lambda(X,D)^*
v
}_{L^2\p{\R^n}}
\le
C
\n{v}_{L^2\p{\R^n}},
\end{equation}
where we set
\begin{align*}
&\sigma_\lambda(x,\xi)=\lambda^{-1/2}\jp{\lambda^{-1}x}^{-1/2+l}
\sigma(\lambda^{-1}x,\xi),
\\
&\tau_\lambda(x,\xi)=\lambda^{-1/2}\jp{\lambda^{-1}x}^{-1/2-l}
\tau(\lambda^{-1}x,\xi).
\end{align*}
If $l>1/2$, for $x$ such that $|\lambda^{-1}x|\geq 2$, we have
$\lambda^{-1/2}\jp{\lambda^{-1}x}^{-1/2+l}\leq C\lambda^{-l}|x|^{-1/2+l}$,
and for $x$ such that $|\lambda^{-1}x|\leq 2$, we have
$\lambda^{-1/2}\jp{\lambda^{-1}x}^{-1/2+l}
\leq C\lambda^{-1/2}
\jp{\lambda^{-1}x}^{-1/2+l'}\leq C\lambda^{-l'}|x|^{-1/2+l'}$
for $l'\leq 1/2$.
Hence we have
\begin{align*}
&\n{
\sigma_\lambda(X,D)
R(\pm1\pm i0)
\tau_\lambda(X,D)^*
v
}_{L^2\p{\R^n}}
\\
\leq&
\p{
\p{\int_{|\lambda^{-1}x|\geq 2}+\int_{|\lambda^{-1}x|\leq 2}}
\abs{
\sigma_\lambda(X,D)
R(\pm1\pm i0)
\tau_\lambda(X,D)^*
v
}^2
\,dx
}^{1/2}
\\
\leq& C
\n{
\lambda^{-l}
|X|^{-1/2+l}
\sigma(\lambda^{-1}X,D)R(\pm1\pm i0)
\tau_\lambda(X,D)^*
v
}_{L^2\p{\R^n}}
\\
+& C
\n{
\lambda^{-l'}
|X|^{-1/2+l'}
\sigma(\lambda^{-1}X,D)R(\pm1\pm i0)
\tau_\lambda(X,D)^*
v
}_{L^2\p{\R^n}}.
\end{align*}
If $0\leq l\leq1/2$, then we have
$\lambda^{-1/2}\jp{\lambda^{-1}x}^{-1/2+l}\leq \lambda^{-l}|x|^{-1/2+l}$
and
\begin{align*}
&\n{
\sigma_\lambda(X,D)
R(\pm1\pm i0)
\tau_\lambda(X,D)^*
v
}_{L^2\p{\R^n}}
\\
\leq& C
\n{
\lambda^{-l}
|X|^{-1/2+l}
\sigma(\lambda^{-1}X,D)R(\pm1\pm i0)
\tau_\lambda(X,D)^*
v
}_{L^2\p{\R^n}}.
\end{align*}
Furthermore, since we have
$\lambda^{-1/2}\jp{\lambda^{-1}x}^{-1/2-l}\leq
\lambda^{-1/2}\jp{\lambda^{-1}x}^{-1/2-l'}\leq
\lambda^{l'}\abs{x}^{-1/2-l'}$ for $l'\leq l$,
we may replace $\tau_\lambda(x,\xi)$ by
$\lambda^{l}|x|^{-1/2-l}\tau(\lambda^{-1}x,\xi)$
or
$\lambda^{l'}|x|^{-1/2-l'}\tau(\lambda^{-1}x,\xi)$
whichever we like,
in the right hand sides of these estimates.

On account of them, it suffices to show estimates \eqref{posres}
and \eqref{negres} for
$\sigma_\lambda(x,\xi)$ and $\tau_\lambda(x,\xi)$
of the forms
\begin{equation}\label{sigmatau}
\sigma_\lambda(x,\xi)=|x|^{-1/2+l}\sigma(\lambda^{-1}x,\xi),\quad
\tau_\lambda(x,\xi)=|x|^{-1/2-l}\tau(\lambda^{-1}x,\xi).
\end{equation}
for $0<l<\min\b{1,(n-1)/2}$
assuming $\sigma_\lambda(x,\xi)=0$ on $\Gamma$, and for $l=0$
assuming $\sigma_\lambda(x,\xi)=\tau_\lambda(x,\xi)=0$ on $\Gamma$.
We remark that $\sigma_\lambda(x,\xi)$ and $\tau_\lambda(x,\xi)$
defined by \eqref{sigmatau} satisfy the estimates
\begin{equation}\label{sigtauest}
\begin{aligned}
&|\partial_x^\alpha\partial_\xi^\beta
\p{\sigma_\lambda(x,\xi)}|
\leq C_{\alpha\beta}\abs{x}^{-1/2+l-\alpha}|\xi|^{\theta-\beta},
\\
&|\partial_x^\alpha\partial_\xi^\beta
\p{\tau_\lambda(x,\xi)}|
\leq C_{\alpha\beta}\abs{x}^{-1/2-l-\alpha}|\xi|^{1-\theta-\beta}
\end{aligned}
\end{equation}
with constants $C_{\alpha\beta}$ independent of $\lambda>0$.

We split estimate \eqref{posres}
into the following two parts:
\begin{equation}\label{high}
\sup_{\lambda>0}
\n{
\sigma_\lambda(X,D)
R(1\pm i0)
\p{1-\chi\circ p}(D)
\tau_\lambda(X,D)^*
v
}_{L^2\p{\R^n}}
\le
C
\n{v}_{L^2\p{\R^n}},
\end{equation}
\begin{equation}\label{low}
\sup_{\lambda>0}
\n{
\sigma_\lambda(X,D)
R(1\pm i0)
\p{\chi\circ p}(D)
\tau_\lambda(X,D)^*
v
}_{L^2\p{\R^n}}
\le
C
\n{v}_{L^2\p{\R^n}},
\end{equation}
where $\chi(t)\in \mathcal{C}^\infty_0\p{\R_+}$ is a function
which is equal to $1$ near $t=1$.
Estimates  \eqref{negres} and \eqref{high} are obtained if we write
\begin{align*}
&\sigma_\lambda(X,D)R(-1)
\tau_\lambda(X,D)^*
=\tilde\sigma_\lambda(X,D,Y)m(X,D,Y)\tilde\tau_\lambda(X,D,Y)^*,
\\
&\sigma_\lambda(X,D)R(1\pm i0)
\p{1-\chi\circ p}(D)\tau_\lambda(X,D)^*
=\tilde\sigma_\lambda(X,D,Y)\tilde m(X,D,Y)\tilde\tau_\lambda(X,D,Y)^*,
\end{align*}
where
\[
\tilde\sigma_\lambda(X,D,Y)=\sigma_\lambda(X,D)|D|^{-\theta-1/2}|Y|^{-l},
\quad
\tilde\tau_\lambda(X,D,Y)=\tau_\lambda(X,D)|D|^{\theta-3/2}|Y|^l,
\]
and
\begin{align*}
& m(X,D,Y)=|X|^{l}|D|^2R(-1)
|Y|^{-l},
\\
& \tilde m(X,D,Y)=|X|^{l}|D|^2R(1\pm i0)
\p{1-\chi\circ p}(D)|Y|^{-l}.
\end{align*}
All of them are $L^2$-bounded (uniformly in $\lambda>0$) by
Lemma \ref{lemma2} (with $b=1/2$ and $\delta=1/2\mp l$)
and estimates \eqref{sigtauest}
together with the following lemma by
Kurtz and Wheeden \cite[Theorem 3]{KW}:
\medskip
\begin{lem}\label{lemma1}
Let $-n/2<\delta<n/2$.
Then we have
\begin{align*}
&\n{|x|^\delta m(D)u}_{L^2(\R^n)}
\le C
\sum_{|\gamma|\leq n}
\sup_{\xi\in\R^n}
\abs{
|\xi|^{|\gamma|}\partial^\gamma m(\xi)
}
\n{|x|^\delta u
}_{L^2(\R^n)},
\\
&\n{\jp{x}^\delta m(D)u}_{L^2(\R^n)}
\le C
\sum_{|\gamma|\leq n}
\sup_{\xi\in\R^n}
\abs{
|\xi|^{|\gamma|}\partial^\gamma m(\xi)
}
\n{\jp{x}^\delta u
}_{L^2(\R^n)}.
\end{align*}
\end{lem}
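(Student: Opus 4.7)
The plan is to reduce both estimates to the weighted Fourier multiplier theorem of Kurtz and Wheeden \cite{KW}, which asserts that whenever $w$ is a Muckenhoupt $A_2$ weight on $\R^n$ and the symbol $m(\xi)$ satisfies the H\"ormander--Mikhlin bound
\[
M_m:=\sum_{|\gamma|\leq n}\sup_{\xi\in\R^n\setminus 0}\abs{|\xi|^{|\gamma|}\partial^\gamma m(\xi)}<\infty,
\]
the multiplier $m(D)$ is bounded on $L^2(w\,dx)$ with operator norm controlled by $C(n,[w]_{A_2})\,M_m$. Rewriting the weighted norms in the conclusion as
\[
\n{|x|^\delta m(D)u}_{L^2(\R^n)}=\n{m(D)u}_{L^2(|x|^{2\delta}\,dx)},
\]
and analogously with $\jp{x}^{2\delta}$ in place of $|x|^{2\delta}$, the whole matter is reduced to verifying that both $|x|^{2\delta}$ and $\jp{x}^{2\delta}$ belong to $A_2(\R^n)$ under the hypothesis $-n/2<\delta<n/2$.

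For the homogeneous weight I would invoke the classical criterion $|x|^\alpha\in A_p(\R^n)\Leftrightarrow -n<\alpha<n(p-1)$; taking $p=2$ and $\alpha=2\delta$ gives precisely the assumed range. For the inhomogeneous weight I would argue as follows. On the region $|x|\geq 1$ the weights $\jp{x}^{2\delta}$ and $|x|^{2\delta}$ are comparable, while on $|x|\leq 2$ the weight $\jp{x}^{2\delta}$ is smooth, strictly positive, and bounded above and below by absolute constants. Estimating the $A_2$-quotient
\[
|B|^{-2}\p{\int_B \jp{x}^{2\delta}\,dx}\p{\int_B \jp{x}^{-2\delta}\,dx}
\]
over a ball $B$ by splitting into cases according to whether $B$ is contained in the unit ball, lies outside the unit ball, or straddles its boundary, one checks uniformly in $B$ that this quantity is bounded by using the known $A_2$-bound for $|x|^{2\delta}$ in the first two cases and the boundedness of $\jp{x}^{\pm 2\delta}$ near the origin in the last. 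This shows $\jp{x}^{2\delta}\in A_2(\R^n)$ in the same range of $\delta$.

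With both weights confirmed to lie in $A_2(\R^n)$, the Kurtz--Wheeden theorem applied to $m(\xi)$ yields the two inequalities at once, with constants controlled by $M_m$ as displayed on the right-hand sides. The only mildly delicate step is the $A_2$-verification for $\jp{x}^{2\delta}$, but since $\jp{x}^{2\delta}$ differs from $|x|^{2\delta}$ only by a smooth strictly positive factor of order one away from infinity, it does not introduce any essentially new difficulty beyond the homogeneous case. Consequently the proof amounts to an application of the cited Kurtz--Wheeden result together with this routine $A_2$-check.
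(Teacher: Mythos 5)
Your proof is correct, and for the homogeneous weight $|x|^\delta$ it coincides with the paper's (direct appeal to Kurtz--Wheeden). For the Japanese-bracket weight $\jp{x}^\delta$, however, you take a genuinely different route from the paper: you re-invoke Kurtz--Wheeden a second time, which requires you to check that $\jp{x}^{2\delta}\in A_2(\R^n)$ for $-n/2<\delta<n/2$. That $A_2$-check is true and can be made rigorous by the case analysis you sketch (though the ``straddling'' case needs one more line: for a large ball $B$ meeting the unit ball, the product of averages is bounded by splitting $\jp{x}^{2\delta}\sim 1+|x|^{2\delta}$ and using the known bound for $|x|^{2\delta}$ together with $2\delta<n$). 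The paper avoids the second $A_2$-verification altogether by a shorter elementary reduction: for $0\le\delta<n/2$, the pointwise inequality $\jp{x}^\delta\le C(1+|x|^\delta)$ together with $1\le\jp{x}^\delta$ and $|x|^\delta\le\jp{x}^\delta$ reduces the $\jp{x}^\delta$-estimate to the unweighted estimate plus the already-proved $|x|^\delta$-estimate; the range $-n/2<\delta\le0$ is then handled by taking adjoints, since $(\jp{x}^\delta m(D)\jp{x}^{-\delta})^*=\jp{x}^{-\delta}\overline{m}(D)\jp{x}^{\delta}$ falls under the nonnegative case. Your version is conceptually cleaner in that it treats both weights on the same footing, while the paper's is shorter and avoids any new weight-class verification; both are valid.
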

\begin{proof}
The first estimate is due to \cite{KW}.
The second estimate with $0\leq\delta<n/2$ is obtained from it because of
the inequality $\jp{x}^\delta\leq C(1+|x|^\delta)$.
The one with $-n/2<\delta\leq0$ is just the dual of it.
\end{proof}
\medskip

\par
We prove estimate (\ref{low}).
Let $\rho(x)\in C^\infty_0\p{\R^n}$ be equal to 1 near the origin
and $\tilde{\chi}(t)\in C^\infty_0\p{\R_+}$ be equal to 1 on $\supp \chi$.
We set
\begin{align*}
&\sigma_\lambda^0(x,\xi)=\rho(x)\sigma_\lambda(x,\xi),\quad
\sigma_\lambda^1(x,\xi)=
\p{1-\rho(x)}\sigma_\lambda(x,\xi)\p{\tilde{\chi}\circ p}(\xi),
\\
&\tau_\lambda^0(x,\xi)=\rho(x)\tau_\lambda(x,\xi),\quad
\tau_\lambda^1(x,\xi)=
\p{1-\rho(x)}\tau_\lambda(x,\xi)\p{\tilde{\chi}\circ p}(\xi).
\end{align*}
By Proposition \ref{LAP} and estimates \eqref{sigtauest}, we have
\begin{equation}\label{sigma11}
\sup_{\lambda>0}
\n{
\sigma_\lambda^1(X,D)
R(1\pm i0)
\p{\chi\circ p}(D)
\tau_\lambda^1(X,D)^*
v
}_{L^2\p{\R^n}}
\le
C
\n{v}_{L^2\p{\R^n}}.
\end{equation}

Other estimates are obtained form the following lemma
which was proved by \cite[Theorem 1.2]{SuT}
(see also \cite[Theorem 3.1]{Su1}):
\medskip
\begin{lem}\label{SuTsu}
Let $1-n/2<\alpha<1/2$ and $1-n/2<\beta<1/2$.
Then we have
\[
\n{
\abs{x}^{\alpha-1}
\abs{D}^{\alpha+\beta}
R(1\pm i0)
\abs{x}^{\beta-1}
v
}_{L^2\p{\R^n}}
\le
C
\n{v}_{L^2\p{\R^n}}.
\]
\end{lem}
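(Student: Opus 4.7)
The plan is to follow the standard route for homogeneous-weight resolvent estimates: Plancherel on the Fourier side, followed by a split of the resolvent multiplier into its principal-value and delta parts, each of which is treated by one of the two classical tools already available in this paper, namely the Hardy-Littlewood-Stein-Weiss inequality (Lemma \ref{SW}) for the smooth piece and the Fourier restriction theorem for the singular piece. Concretely, using $R(1\pm i0)=\mathcal{F}_\xi^{-1}(p(\xi)^2-1\mp i0)^{-1}\mathcal{F}_x$ together with a $TT^{*}$/duality argument, the claim is equivalent to the bilinear bound
\[
\left|\int_{\R^n}\frac{|\xi|^{\alpha+\beta}}{p(\xi)^2-1\mp i0}\,\widehat{f}(\xi)\,\overline{\widehat{g}(\xi)}\,d\xi\right|\le C\,\n{|x|^{1-\beta}f}_{L^2}\,\n{|x|^{1-\alpha}g}_{L^2}.
\]

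I would then apply the Sokhotski-Plemelj identity
\[
\frac{1}{p(\xi)^2-1\mp i0}=\mathrm{p.v.}\,\frac{1}{p(\xi)^2-1}\pm i\pi\,\delta\!\left(p(\xi)^2-1\right)
\]
and treat the two contributions separately. The delta part localises to the characteristic surface $S=\{p(\xi)=1\}$, whose non-vanishing Gaussian curvature, assumed in \eqref{def:Lp}, makes a weighted restriction estimate of Stein-Tomas / Agmon-H\"ormander type available for controlling it. For the principal-value part, I would insert a smooth cut-off isolating a tubular neighbourhood of $S$: away from $S$ and from $\xi=0$ the multiplier $|\xi|^{\alpha+\beta}/(p(\xi)^2-1)$ is smooth and essentially homogeneous of degree $\alpha+\beta-2$, so its kernel falls within the scope of Lemma \ref{SW} applied with $\gamma=1-\alpha$, $\delta=1-\beta$, and $m=n-(2-\alpha-\beta)$; near $S$, coordinates adapted to the defining function $p(\xi)-1$ convert the singularity into a one-dimensional Hilbert transform in the normal direction, which is then controlled slice by slice using the tangential restriction estimate from the previous step.

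The hard part will be the principal-value piece near $S$, where the Hilbert-type singularity interacts with the curvature of the surface; the non-vanishing Gaussian curvature is precisely what furnishes uniform tangential bounds so that the slice-by-slice argument closes. The allowed range $1-n/2<\alpha,\beta<1/2$ is then pinned down by matching the Stein-Weiss endpoint conditions $1-\alpha,1-\beta<n/2$ with the upper endpoint $\alpha,\beta<1/2$ coming from the restriction estimate, the former explaining the lower bound and the latter the upper bound. Since this lemma is exactly the content of \cite[Theorem 1.2]{SuT}, in practice I would invoke that reference for the detailed bookkeeping rather than reproduce it here.
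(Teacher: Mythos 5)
The paper does not prove Lemma \ref{SuTsu} itself: it is imported verbatim from \cite[Theorem 1.2]{SuT} (see also \cite[Theorem 3.1]{Su1}), and your proposal ends by deferring to the same reference, so the two treatments coincide. The preliminary sketch you give---Plancherel/duality reduction to a bilinear Fourier-side bound, Sokhotski--Plemelj split of the resolvent multiplier, trace/restriction estimate on the curved characteristic surface $\{p(\xi)=1\}$ for the delta and near-surface pieces, and Stein--Weiss (Lemma \ref{SW}) for the off-surface piece---is a reasonable reconstruction of the argument in that reference, and it correctly traces the lower endpoint $1-n/2$ and upper endpoint $1/2$ to the Stein--Weiss and trace constraints respectively.
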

\medskip
In fact, from this lemma
and Lemma \ref{lemma1} with
$m(\xi)=f(|\xi|)|\xi|^{-(\alpha+\beta)}\p{\chi\circ p}(\xi)^{-1}$,
we obtain the estimate
\begin{equation}\label{simple}
\n{
\abs{x}^{\alpha-1}
R(1\pm i0)
f(|D|)
\abs{x}^{\beta-1}
v
}_{L^2\p{\R^n}}
\le
C_f
\n{v}_{L^2\p{\R^n}}
\end{equation}
for $f\in C^\infty_0(\R_+)$.
Since $\sigma_\lambda^0(X,D)|D|^{-(\theta+\varepsilon)}|Y|^{1/2+\varepsilon}$
and $\tau_\lambda^0(X,D)|D|^{-(1-\theta+\varepsilon)}|Y|^{1/2+\varepsilon}$ 
are $L^2$-bounded uniformly in $\lambda>0$ by Lemma \ref{lemma2}
(with $b=\varepsilon$ and $\delta=1/2+2\epsilon$),
where $0<\varepsilon<(n-1)/4$ and $l/2\leq\varepsilon$,
the estimate
\begin{equation}\label{sigma00}
\sup_{\lambda>0}
\n{
\sigma_\lambda^0(X,D)
R(1\pm i0)
\p{\chi\circ p}(D)
\tau_\lambda^0(X,D)^*
v
}_{L^2\p{\R^n}}
\le
C
\n{v}_{L^2\p{\R^n}}
\end{equation}
is reduced to the estimate
\[
\n{
\abs{x}^{-(1/2+\varepsilon)}
R(1\pm i0)
\p{\chi\circ p}(D)
|D|^{1+2\varepsilon}
\abs{x}^{-(1/2+\varepsilon)}
v
}_{L^2\p{\R^n}}
\le
C
\n{v}_{L^2\p{\R^n}}
\]
which is a special case of estimate \eqref{simple}
with $\alpha=\beta=1/2-\varepsilon$.
Similarly, we have that
$\tau_\lambda^1(X,D)|D|^{-(1-\theta+\varepsilon)}|Y|^{1/2+\varepsilon}$
with $\epsilon=l/2$ is $L^2$-bounded uniformly in
$\lambda>0$, and the estimate
\begin{equation}\label{sigma01}
\sup_{\lambda>0}
\n{
\sigma_\lambda^0(X,D)
R(1\pm i0)
\p{\chi\circ p}(D)
\tau_\lambda^1(X,D)^*
v
}_{L^2\p{\R^n}}
\le
C
\n{v}_{L^2\p{\R^n}}
\end{equation}
with $l>0$ is also reduced to estimate \eqref{simple}.

Hence all the rest to be shown is the estimate
\begin{equation}\label{sigma10}
\sup_{\lambda>0}
\n{
\sigma_\lambda^1(X,D)
R(1\pm i0)
\p{\chi\circ p}(D)
\tau_\lambda^0(X,D)^*
v
}_{L^2\p{\R^n}}
\leq
C\n{v}_{L^2\p{\R^n}}.
\end{equation}
We remark that the estimate \eqref{sigma01} with $l=0$
is just the dual of estimate \eqref{sigma10} with $l=0$.
By Lemmas \ref{basiclem} and Lemma \ref{Com},
estimate \eqref{sigma10}
is reduced to the estimate
\[
\sup_{\lambda>0}
\n{
\jp{x}^{-3/2+l}
R(1\pm i0)
\p{\chi\circ p}(D)
\p{\Omega_{ij}}^k
\tau_\lambda^0(X,D)^*
v
}_{L^2\p{\R^n}}
\leq
C\n{v}_{L^2\p{\R^n}},
\]
where $k=0,1$.
Since the symbol of $\Omega_{ij}$ is linear in $x$ and
positively homogeneous of order $1$ in $\xi$ by \eqref{Omega},
$\tau_\lambda^0(X,D)\p{\Omega_{ij}^*}^k$ is a finite sum of the
operators of the form
$\rho(X)|X|^{-1/2-l+\mu}\tilde\tau_\lambda(X,D)|D|^\mu$,
where $\mu=0,1$ and
$\tilde\tau_\lambda(x,\xi)$ is
homogeneous of orders $1-\theta$ in $\xi$.
Furthermore
$\rho(X)|X|^{-1/2-l+\mu}\tilde\tau_\lambda(X,D)|D|^\mu
|D|^{-(1-\theta+\varepsilon+\mu)}|Y|^{1/2+\varepsilon}$
are $L^2$-bounded uniformly in $\lambda>0$ by Lemma \ref{lemma2}
(with $b=\varepsilon$ and $\delta=1/2+2\epsilon$),
where $0<\varepsilon<(n-1)/4$ and $l/2\leq\varepsilon$.
Noticing the trivial inequality
$\jp{x}^{-3/2+l}\leq\jp{x}^{l/2-1}\leq|x|^{l/2-1}$,
the estimate is further reduced to
\[
\n{
\abs{x}^{l/2-1}
R(1\pm i0)
\p{\chi\circ p}(D)
|D|^{1-\theta+\varepsilon+\mu}
\abs{x}^{-(1/2+\varepsilon)}
v
}_{L^2\p{\R^n}}
\le
C
\n{v}_{L^2\p{\R^n}}
\]
which are implied again by \eqref{simple}
with $\alpha=l/2$ and $\beta=1/2-\varepsilon$.

Summing up estimates \eqref{sigma11}, \eqref{sigma00},
\eqref{sigma01}, and \eqref{sigma10}, we have estimate \eqref{low},
and thus 
the proof of Theorem \ref{RE} is complete.

\medskip
\par
\section{Herbst-Skibsted's resolvent estimate}\label{Section4}
In this section, we will explain the relation between
Theorem \ref{RE}
and Herbst-Skibsted's resolvent estimate in \cite{HS}.
\par
Let $S(x,\lambda)$ be the solution of eikonal equation,
\[
p(\nabla S(x,\lambda))^2+V(x)=\lambda\quad(\lambda>0)
\]
for $L=p(D)^2+V(x)$. 
Here we always assume $V=0$ but keep it remaining
in the notation
because the case $V\neq0$ is admitted in \cite{HS}
assuming the potential $V$ to be smooth and to
have the property
\[
|\partial^\alpha V(x)|\leq C_\alpha\jp{x}^{-\varepsilon_0-|\alpha|}
\]
where $0<\varepsilon_0<1$.
In this case, we have $S(x,\lambda)=\sqrt{\lambda}p^*(x)$,
where $p^*(x)$ is the dual function of $p(\xi)$ defined 
by satisfying
the relations $p(\nabla p^*(x))=1$ and
$\nabla p^*(\nabla p(\xi))=\xi/p(\xi)$ 
(see \cite[Theorem 3.1]{RS2}).
Noting that $D=-i\nabla$ is the momentum operator, we set
\[
\gamma(\lambda)=D\mp\nabla S(x,\lambda).
\]
The quantisation of $\gamma(\lambda)$ is given by
\[
\bar\gamma=D\mp\nabla S(x,p(D)^2).
\]
We remark that the symbol 
$\bar\gamma(x,\xi)=\xi\mp p(\xi)\nabla p^*(x)$ 
of the operator
$\bar\gamma$ satisfies the half structure condition
\begin{equation}\label{EQ:half-str}
\text{$\bar\gamma(x,\xi)=0$ on $\Gamma^\pm$},
\end{equation} 
where
\[
\Gamma^\pm
=\b{\p{\lambda\nabla p(\xi),\xi}:\,
 x\in\R^n\setminus 0,\,\pm\lambda>0}.
\]
In the case $L=-\Delta$, for example, we have
\[
\gamma(\lambda)=D \mp\sqrt{\lambda}\frac x{|x|},
\quad
\bar\gamma=D \mp\frac x{|x|}|D|,
\]
and the following results are already known, as
an adapted version of those in \cite{HS}:
\medskip
\begin{thm}[{\cite[Theorems 4.4, 5.1]{HS}}]\label{HSres}
Let $0\leq s\leq1$, let $\delta>1/2$, and let $\chi\in C^\infty_0(\R_+)$.
Assume $L=-\Delta+V$.
Then, we have a {\it quantum} result
\[
\sup_{\lambda\in\R}
\n{\bar\gamma\, R(\lambda\pm i0)\chi(|D|)v}_{L^2_{-\delta+s}}
\leq C\n{v}_{L^2_{\delta+s}}
\]
and a {\it classical} result
$
\gamma(\lambda)R(\lambda\pm i0)\in\mathcal{L}(L^2_{\delta+s},L^2_{-\delta+s}).
$
\end{thm}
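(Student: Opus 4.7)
The plan is to derive Theorem \ref{HSres} by adapting the scaling-and-microlocal-cutoff scheme used in the proof of Theorem \ref{RE}, exploiting the half-structure condition \eqref{EQ:half-str}. The key observation is that although $\bar\gamma(x,\xi)$ vanishes only on half of $\Gamma$, the distributional kernel of $R(\lambda+i0)$ has its singularities microlocally concentrated on $\Gamma^+$ (and that of $R(\lambda-i0)$ on $\Gamma^-$). This is the microlocal content of the Sommerfeld radiation condition, so $\bar\gamma$ with symbol vanishing on $\Gamma^\pm$ is precisely matched to cancel the singularities of $R(\lambda\pm i0)$.

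First I would reduce, via the scaling identity used in Section \ref{Section3}, to the fixed-energy case $\lambda=1$, so that the estimate becomes the uniform $L^2$-boundedness of $\jp{X}^{-\delta+s}\bar\gamma\,R(1\pm i0)\chi(|D|)\jp{X}^{-\delta-s}$ for $s\in[0,1]$ and $\delta>1/2$. Next I would introduce a homogeneous microlocal partition of unity $1=\varphi_+(x,\xi)+\varphi_-(x,\xi)$ on the cosphere bundle, with $\varphi_\pm$ supported in a narrow conic neighbourhood of $\Gamma^\pm$. For the $+i0$ case, the piece involving $\varphi_-$ stays away from the outgoing set on which the resolvent is singular and is microlocally smoothing; it can be handled by Lemma \ref{lemma1} together with the weighted kernel bound of Lemma \ref{lemma2}. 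For the piece involving $\varphi_+$, the symbol of $\bar\gamma\,\varphi_+(X,D)$ is supported near $\Gamma^+$ and vanishes there, so one seeks a one-sided analogue of Lemma \ref{basiclem} producing a factor $\jp{x}^{-1}$ that recovers the desired bound via Lemma \ref{Res}. The classical estimate then follows from the quantum one by writing $\gamma(\lambda)-\bar\gamma=\mp(\nabla S(x,p(D)^2)-\nabla S(x,\lambda))$: since $R(\lambda\pm i0)\chi(|D|)$ is microlocally concentrated on the shell $\{p(\xi)^2=\lambda\}$, the difference is of lower order on its range and can be absorbed using \eqref{Agmon} after interpolation in $s$.

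The main obstacle I expect is this one-sided analogue of Lemma \ref{basiclem}. In Section \ref{Section2} the condition that $\sigma$ vanishes on all of $\Gamma$ is exploited by writing $\sigma$ as a combination of the rotationally invariant symbols $\omega_{ij}(x,\xi)$, whose quantisations $\Omega_{ij}(X,D)$ commute exactly with the functional calculus of $p(D)$ by Lemma \ref{Com}, and this commutation drives the whole argument. For the half-vanishing condition no analogous exactly commuting family is available; one must instead work with genuine symbolic calculus along the cone around $\Gamma^\pm$ and control commutator remainders of order $\jp{x}^{-1}$ using the freedom in the weight $\jp{x}^s$. Carrying out this bookkeeping uniformly in $\lambda$ and $s\in[0,1]$ is where the real work lies; it is essentially the analytic content of Sommerfeld's radiation condition recast in the phase-space formalism of this paper.
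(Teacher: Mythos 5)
The paper does not prove Theorem~\ref{HSres} at all: it is stated as a citation to Herbst--Skibsted~\cite{HS} (Theorems~4.4 and~5.1), used as a known black box to compare with Theorem~\ref{RE}. So there is no ``paper's own proof'' to match against; the honest answer is that your proposal is attempting something the authors deliberately did not.

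On the merits of your proposal as a standalone argument, two concerns. First, you yourself flag the central gap: a one-sided analogue of Lemma~\ref{basiclem}. This is not bookkeeping that can be deferred --- the whole mechanism of Section~\ref{Section2} is that vanishing of $\sigma$ on the \emph{full} set $\Gamma$ lets one factor out $\Omega_{ij}$, and Lemma~\ref{Com} gives exact commutation of $\Omega_{ij}(X,D)$ with $R(\lambda\pm i0)$, so the weight $\jp{x}^{-1}$ can be propagated across the resolvent at no cost. For the half-structure $\Gamma^\pm$ there is no family of operators that both annihilates $\Gamma^\pm$ (but not $\Gamma^\mp$) and commutes with the functional calculus of $p(D)$; the commutator errors are of the same size as what you are trying to gain, so the argument does not close. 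Second, your starting observation --- that the singularities of $R(\lambda\pm i0)$ are microlocally concentrated near $\Gamma^\pm$ --- is essentially the Sommerfeld radiation condition itself, which is precisely what Theorem~\ref{HSres} is used to derive later in Section~\ref{Section4}. Invoking it as an input makes the argument circular unless you establish that microlocalization independently, which would already be most of the theorem.

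For the record, the actual proof in~\cite{HS} is not a static microlocal argument of this kind: as the title ``Time-dependent approach to radiation conditions'' suggests, it proceeds through propagation/Mourre-type estimates for $e^{-itH}$ and transfers them to the resolvent by Fourier transform in time. That is a genuinely different technology from the weighted-$L^2$/$\Omega_{ij}$-commutation scheme of this paper, which is exactly why the authors quote~\cite{HS} rather than re-derive it, and then emphasize in Section~\ref{Section4} the contrast between the two structural assumptions (half versus full vanishing) and the resulting endpoint gain.
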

\medskip
The quantum result in Theorem \ref{HSres} with $s=0$ 
is a usual resolvent
estimate, but we can extend it to the case 
$0<s\leq1$ by virtue of the half
structure \eqref{EQ:half-str} of the operator $\bar\gamma$. 
We remark that the classical result in Theorem \ref{HSres}
was first proved by Isozaki \cite{I}, 
and it can be also derived from
the quantum result in Theorem \ref{HSres}.
Furthermore, it implies Sommerfeld's radiation condition
\[
 (\partial_r\mp i\sqrt{\lambda})u\in L^2_{-\delta+s}
\]
for the outgoing and incoming solutions $u=R(\lambda\pm i0)v$
to the Helmholtz equation
\[
(-\triangle-\lambda)u
=v,\quad(\lambda>0,\,v\in L^2_{\delta+s})
\]
since
$i(x/|x|)\cdot\gamma(\lambda)=\partial_r\mp i\sqrt{\lambda}$.

Theorem \ref{HSres}
means that each specified operator $\sigma^\pm(X,D)=\bar\gamma$
with half structure
$\sigma^\pm(x,\xi)=0$ on $\Gamma^\pm$ implies
the estimates for $R(\lambda+i0)$ and $R(\lambda-i0)$, respectively.
On the other hand, our Theorem \ref{RE} means that
any operator $\sigma(X,D)$ with full structure
$\sigma(x,\xi)=0$ on $\Gamma$ implies estimates for both.

Furthermore, Theorem \ref{RE} corresponds to
the critical case of Theorem \ref{HSres}.
In fact, the quantum estimate in Theorem \ref{HSres}
can be interpreted as
\[
\sup_{\lambda\in\R}
\n{\sigma^\pm(X,D)R(\lambda\pm i0)\chi(|D|)v}_{L^2_{-1/2+l}}
\leq C\n{v}_{L^2_{1/2+l+2\varepsilon}}
\qquad(-\varepsilon\leq l<1,\,\varepsilon>0)
\]
for $\sigma^\pm(X,D)=\bar\gamma$,
while estimate \eqref{eq:res1} in Corollary \ref{REcor} 
(together with Lemma \ref{lemma1})
implies a
better estimate
\[
\sup_{\lambda\in\R}
\n{\sigma(X,D)R(\lambda\pm i0)\chi(|D|)v}_{L^2_{-1/2+l}}
\leq C\n{v}_{L^2_{1/2+l}}
\qquad(0<l<1)
\]
for $\sigma(X,D)$ satisfying the full structure condition
(in the case $n\geq3$). 

As another advantage of Theorem \ref{RE}, we
can treat the general operator $L=p(D)^2+V$ instead of $-\Delta+V$
although we have to assume $V=0$.

\medskip
\par
\section{Smoothing estimates for Schr\"odinger equations}\label{Section5}
It is known that uniform resolvent estimates
straightforwardly
induce smoothing estimates for
Schor\"odinger evolution operators.
For example, estimate \eqref{eq:res2} in Corollary \ref{REcor}
says that the operator $A=\jp{x}^{-1/2}\sigma(X,D)$
is $L$-supersmooth on the separable Hilbert space $H=L^2$, that is,
$A$ satisfies 
\[
\sup_{\lambda\in\R}
\n{
A
R(\lambda\pm i0)
A^*
v}_{H}
\le
C\n{v}_{H}.
\]
Then by the work of Kato \cite{K},
we have the estimate
\[
\int\n{Au}_{H}^2\,dt
\leq C\n{\varphi}_{H}^2
\]
for the solution $u(t,x)=e^{-itL}\varphi(x)$ to
\begin{equation}\label{eq:hom}
\left\{
\begin{aligned}
\p{i\partial_t-L}\,u(t,x)&=0\\
u(0,x)&=\varphi(x).
\end{aligned}
\right.
\end{equation}
Hence we equivalently have
\medskip
\begin{thm}\label{thm:thm611}
Let $n\geq2$.
Suppose that $\sigma(x,\xi)\sim|\xi|^{1/2}$ and
$\sigma(x,\xi)=0$ on $\Gamma$.
Then the solution $u$ to equation \eqref{eq:hom}
satisfies the estimate
\[
\n{\jp{x}^{-1/2}\sigma(X,D) u}_{L^2\p{\R_t\times\R^n_x}}
\leq C
\n{\varphi}_{L^2(\R^n_x)}.
\]
\end{thm}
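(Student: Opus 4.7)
The plan is to recognize the desired estimate as an immediate instance of Kato's abstract smoothing principle applied to the self-adjoint operator $L = p(D)^2$ on the Hilbert space $H = L^2(\R^n)$. Specifically, setting $A = \jp{X}^{-1/2}\sigma(X,D)$, the estimate \eqref{eq:res2} of Corollary \ref{REcor} can be rewritten in the form
\[
\sup_{\lambda\in\R}\n{A R(\lambda\pm i0) A^* v}_{H} \le C\n{v}_{H},
\]
since the multiplication operator $\jp{X}^{-1/2}$ implements the isometry between $L^2_{\pm 1/2}$ and $L^2$, and the adjoint of $A$ is $A^* = \sigma(X,D)^*\jp{X}^{-1/2}$. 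This is precisely the defining condition for $A$ to be $L$-supersmooth in the sense of Kato \cite{K}.

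With this reformulation in hand, Kato's theorem yields the smoothing estimate
\[
\int_{\R}\n{A e^{-itL}\varphi}_{H}^2\,dt \le C'\n{\varphi}_{H}^2
\]
for all $\varphi \in H$; since $u(t,x) = e^{-itL}\varphi(x)$ solves equation \eqref{eq:hom}, this is exactly the desired bound. The left-hand side, unpacked, is
\[
\int_{\R}\n{\jp{x}^{-1/2}\sigma(X,D) u(t,\cdot)}_{L^2(\R^n_x)}^2\,dt,
\]
which matches the statement after combining the time and space norms.

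The substantive mathematical content is already contained in Corollary \ref{REcor}; what remains is only a verification that $A$ is densely defined and closable, so that Kato's framework applies. Since $\sigma(x,\xi)\sim|\xi|^{1/2}$ has polynomial behaviour at infinity and smooth symbol, $A$ is defined on the Schwartz class $\mathcal S(\R^n)$, which is dense in $L^2$, and standard pseudodifferential calculus ensures $A$ is closable. There is no real obstacle: the main work is the uniform resolvent estimate proved in Section \ref{Section3}, and the present theorem is its standard evolution-equation counterpart via Kato's transfer principle.
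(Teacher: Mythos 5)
Your argument is exactly the one the paper uses: identify $A=\jp{X}^{-1/2}\sigma(X,D)$ as $L$-supersmooth via Corollary \ref{REcor}, then invoke Kato's transfer principle \cite{K} to pass from the uniform resolvent estimate to the time-averaged smoothing estimate. The proposal is correct and matches the paper's proof.
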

\medskip
We remark that Theorem \ref{thm:thm611} is a refinement of the
following well known smoothing estimate
\begin{equation}\label{est:sm1}
\n{\jp{x}^{-1/2-l}|D|^{1/2} u}_{L^2\p{\R_t\times\R^n_x}}
\leq C
\n{\varphi}_{L^2(\R^n_x)}\qquad(l>0)
\end{equation}
for the solution $u$ to equation \eqref{eq:hom}
(see, for example, \cite{BK} and \cite{Ch2}).
Theorem \ref{thm:thm611} covers its critical case $l=0$
under the structure condition $\sigma(x,\xi)=0$ on $\Gamma$.
In our previous work \cite{RS2},
we also proved the estimate
\begin{equation}\label{est:ky1}
\n{\abs{x}^{-1/2}\sigma(X,D) u}_{L^2\p{\R_t\times\R^n_x}}
\leq C
\n{\varphi}_{L^2(\R^n_x)}
\end{equation}
when $\sigma(x,\xi)$ satisfies the same
structure condition, and is positively homogeneous
of order $0$ in $x$ and $1/2$ in $\xi$.
Estimate \eqref{est:ky1} is a refinement of the estimate
\[
\n{\abs{x}^{\alpha-1}|D|^{\alpha} u}_{L^2\p{\R_t\times\R^n_x}}
\leq C
\n{\varphi}_{L^2(\R^n_x)}\qquad(1-n/2<\alpha<1/2)
\]
by \cite{KY} and \cite{Su1}.
\par
On the other hand, as it
is discussed in \cite{Su1}, \cite{Su2}, \cite{SuT},
we can construct the solution $u(t,x)$
to the inhomogeneous equation
\begin{equation}\label{eq:inhom}
\left\{
\begin{aligned}
\p{i\partial_t-L}\,u(t,x)&=f(t,x)\\
u(0,x)&=0
\end{aligned}
\right.
\end{equation}
as
\[
u(t,x)
=\frac1i
\mathcal F^{-1}_\tau
R(-\tau+i0)
\mathcal F_t
f^+(t,x)
+
\frac1i
\mathcal F^{-1}_\tau
R(-\tau-i0)
\mathcal F_t
f^-(t,x).
\]
Here $f^\pm$ denotes the function $f$ multiplied by the 
Heaviside function $Y(\pm t)$, that is, the
characteristic
function of the set $\{t: \pm t\geq0\}$.
Hence estimate \eqref{eq:res1} in Corollary \ref{REcor} yields
the following result for \eqref{eq:inhom},
which is a refinement of the estimate
\begin{equation}\label{est:sm2}
\n{\jp{x}^{-1/2-l}|D| u}_{L^2\p{\R_t\times\R^n_x}}
\leq C
\n{\jp{x}^{1/2+l}f}_{L^2\p{\R_t\times\R^n_x}}\qquad(l>0)
\end{equation}
(see, for example, \cite{Ch2}):
\medskip
\begin{thm}\label{thm:thm62}
Let $n\geq2$.
Suppose that $\sigma(x,\xi)\sim|\xi|$ and
$\sigma(x,\xi)=0$ on $\Gamma$.
Then the solution $u$ to equation \eqref{eq:inhom}
satisfies the estimate
\[
\n{\jp{x}^{-1/2+l}\sigma(X,D) u}_{L^2\p{\R_t\times\R^n_x}}
\leq C
\n{\jp{x}^{1/2+l}f}_{L^2\p{\R_t\times\R^n_x}}
\]
for $0<l<1$ if $n\geq3$ and $0<l<1/2$ if $n=2$.
\end{thm}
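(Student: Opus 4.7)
The plan is to reduce the space-time estimate for the inhomogeneous equation to the stationary resolvent estimate \eqref{eq:res1} fibre by fibre in the dual time variable, using Plancherel's theorem in $t$. Since $\sigma(X,D)$ acts only in the $x$-variable, it commutes with the Fourier transform in $t$, so the whole argument will hinge on the uniform-in-$\lambda$ nature of Corollary \ref{REcor}.

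Concretely, I would start from the representation of the Duhamel solution quoted in the text,
\[
u(t,x)=\tfrac{1}{i}\mathcal{F}^{-1}_\tau R(-\tau+i0)\mathcal{F}_t f^{+}(t,x)
+\tfrac{1}{i}\mathcal{F}^{-1}_\tau R(-\tau-i0)\mathcal{F}_t f^{-}(t,x),
\]
and apply Plancherel in $t$ to obtain, for a.e. $\tau\in\R$,
\[
\widehat{u}(\tau,x)=\tfrac{1}{i}R(-\tau+i0)\widehat{f^{+}}(\tau,x)+\tfrac{1}{i}R(-\tau-i0)\widehat{f^{-}}(\tau,x).
\]
Applying $\jp{x}^{-1/2+l}\sigma(X,D)$ in $x$ and invoking estimate \eqref{eq:res1} of Corollary \ref{REcor} with $\lambda=-\tau$, which is uniform in $\lambda\in\R$, gives, for each $\tau$,
\[
\bigl\|\jp{x}^{-1/2+l}\sigma(X,D)\widehat{u}(\tau,\cdot)\bigr\|_{L^2(\R^n_x)}
\le C\bigl(\|\jp{x}^{1/2+l}\widehat{f^{+}}(\tau,\cdot)\|_{L^2(\R^n_x)}
+\|\jp{x}^{1/2+l}\widehat{f^{-}}(\tau,\cdot)\|_{L^2(\R^n_x)}\bigr).
\]
Squaring, integrating in $\tau$, and applying Plancherel once more (now in the reverse direction, using that the weight $\jp{x}^{1/2+l}$ is $t$-independent) yields
\[
\bigl\|\jp{x}^{-1/2+l}\sigma(X,D)u\bigr\|_{L^2(\R_t\times\R^n_x)}
\le C\bigl(\|\jp{x}^{1/2+l}f^{+}\|_{L^2}+\|\jp{x}^{1/2+l}f^{-}\|_{L^2}\bigr)
\le C'\|\jp{x}^{1/2+l}f\|_{L^2(\R_t\times\R^n_x)},
\]
where in the last step I used $|f^{\pm}|\le|f|$. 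This is exactly the claimed estimate, valid in the range of $l$ for which Corollary \ref{REcor} applies, i.e.\ $0<l<1$ if $n\ge 3$ and $0<l<1/2$ if $n=2$.

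The only real subtlety, rather than any genuine obstacle, is making sense of the representation formula and the boundary-value resolvents $R(-\tau\pm i0)$ as operators between weighted $L^2$-spaces measurably in $\tau$; this is standard once one notes that \eqref{eq:res1} provides a uniform operator-norm bound and the weak limits $R(\lambda\pm i0)$ depend measurably on $\lambda$. One first establishes the estimate for Schwartz $f$, where the formula is literal, and then extends by density using the weighted-$L^2$ bound just obtained.
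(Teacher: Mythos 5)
Your proof is correct and is essentially the same argument the paper has in mind: it states the Duhamel representation in terms of $R(-\tau\pm i0)$ applied to $f^\pm$ and then declares Theorem~\ref{thm:thm62} to be an immediate consequence of estimate \eqref{eq:res1} in Corollary~\ref{REcor}, which is precisely the Plancherel-in-$t$/fibrewise-resolvent argument you have spelled out.
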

\medskip
If we drop the structure assumption $\sigma(x,\xi)=0$ on $\Gamma$
from Theorems \ref{thm:thm611}-\ref{thm:thm62},
we cannot expect the same estimates there but can still
show the following
weaker ones:
\medskip
\begin{prop}\label{prop:nonst}
Let $n\geq2$ and let $l>0$.
Suppose that $\sigma(x,\xi)\sim|\xi|^{1/2}$.
Then the solution $u$ to equation \eqref{eq:hom}
satisfies the estimate
\[
\n{\jp{x}^{-1/2-l}\sigma(X,D) u}_{L^2\p{\R_t\times\R^n_x}}
\leq C
\n{\varphi}_{L^2(\R^n_x)}.
\]
Suppose that $\sigma(x,\xi)\sim|\xi|$.
Then the solution $u$ to equation \eqref{eq:inhom}
satisfies the estimate
\[
\n{\jp{x}^{-1/2-l}\sigma(X,D) u}_{L^2\p{\R_t\times\R^n_x}}
\leq C
\n{\jp{x}^{1/2+l}f}_{L^2\p{\R_t\times\R^n_x}}.
\]
\end{prop}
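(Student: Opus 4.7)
Both parts are non-critical analogues of Theorems~\ref{thm:thm611} and \ref{thm:thm62} and are proved along the same lines, except that the refined resolvent estimates of Corollary~\ref{REcor} are replaced by the classical uniform resolvent estimates \eqref{1.1}--\eqref{1.2} in the non-critical range $k=1/2+l>1/2$. Kato's smoothing theorem reduces part (a) to the $L$-supersmoothness of $A=\jp{X}^{-1/2-l}\sigma(X,D)$, i.e., to
\[
\sup_{\lambda\in\R}\n{\jp{X}^{-1/2-l}\sigma(X,D)R(\lambda\pm i0)\sigma(X,D)^*\jp{X}^{-1/2-l}v}_{L^2}\leq C\n{v}_{L^2},
\]
while the Fourier representation of the inhomogeneous solution displayed after \eqref{eq:inhom}, together with Plancherel in $t$, reduces part (b) to
\[
\sup_{\tau\in\R}\n{\jp{X}^{-1/2-l}\sigma(X,D)R(-\tau\pm i0)v}_{L^2}\leq C\n{\jp{X}^{1/2+l}v}_{L^2}.
\]

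\textbf{Symbolic factorisation.} With $b=1/2$ in case (a) and $b=1$ in case (b), fix $\chi\in C^\infty(\R^n)$ vanishing near $\xi=0$ and equal to one on $|\xi|\geq 1$, and decompose
\[
\sigma(x,\xi)=s(x,\xi)|\xi|^b+r(x,\xi),\qquad s(x,\xi)=\chi(\xi)\,\sigma(x,\xi)/|\xi|^b,
\]
so that $s\in\mathcal A^0$ by \eqref{sigma'} and $r$ is a compactly $\xi$-supported smooth symbol. Substituting into the two sandwich estimates above and using that the conjugated operators $\jp{X}^{\mp(1/2+l)}s(X,D)\jp{X}^{\pm(1/2+l)}$ and their adjoints are $L^2$-bounded (their symbols remain in $\mathcal A^0$ after the standard Kumano-go asymptotic expansion, since $s$ gains $\jp{x}^{-|\alpha|}$ per $x$-derivative), the principal contributions become respectively
\[
\jp{X}^{-1/2-l}|D|^{1/2}R(\lambda\pm i0)|D|^{1/2}\jp{Y}^{-1/2-l}\quad\text{and}\quad \jp{X}^{-1/2-l}|D|R(-\tau\pm i0)\jp{Y}^{1/2+l},
\]
which are bounded uniformly by \eqref{1.2} and \eqref{1.1} with $k=1/2+l>1/2$. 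No structure hypothesis on $\sigma$ is needed because the extra weight $\jp{x}^{-l}$ keeps us strictly above the critical exponent.

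\textbf{Remainder and main obstacle.} The contribution of $r(X,D)$ produces operators with $\xi$-support in a fixed compact set; after rescaling as in Section~\ref{Section3}, its sandwich with $R(\lambda\pm i0)$ is uniformly bounded by Lemma~\ref{SW} following the same reasoning that establishes estimate~\eqref{eq:res2-2}, and for~(b) the Plancherel step is unaffected since the bound is $\tau$-uniform. The main technical point is the pseudodifferential verification that the conjugated symbols $\jp{X}^{\pm(1/2+l)}s(X,D)\jp{X}^{\mp(1/2+l)}$ still lie in~$\mathcal A^0$; this is routine but relies crucially on the decay $|\partial_x^\alpha s(x,\xi)|\leq C_\alpha\jp{x}^{-|\alpha|}$ built into the class $\mathcal A^0$, which is in turn guaranteed by the homogeneity hypothesis $\sigma(x,\xi)\sim|\xi|^b$ through~\eqref{sigma'}.
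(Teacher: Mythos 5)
Your proof takes a genuinely different route from the paper's, and it contains a gap in the remainder handling.

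The paper's proof does not return to the resolvent level at all. It factorises
$\jp{x}^{-1/2-l}\sigma(X,D)
=\bigl(\jp{x}^{-1/2-l}\sigma(X,D)|D|^{-1/2}\jp{x}^{1/2+l'}\bigr)\cdot\jp{x}^{-1/2-l'}|D|^{1/2}$
(resp.\ with $|D|^{-1}$, $|D|$) for some $0<l'<l<(n-1)/2$, applies the known non-critical smoothing estimates \eqref{est:sm1}, \eqref{est:sm2} to the right factor, and reduces everything to the $L^2$-boundedness of the \emph{static} (no $\lambda$) weighted operator $\jp{x}^{-1/2-l}\sigma(X,D)\jp{x}^{1/2+l'}$ with $\sigma\sim|\xi|^0$. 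That boundedness is then split into high frequencies (symbolic calculus, \cite[Theorem 1.1]{RS1}) and low frequencies (Lemma \ref{lemma1} plus Lemma \ref{lemma2}). Your plan instead goes back through Kato's supersmoothing theorem and the Fourier representation of the inhomogeneous solution to a \emph{uniform-in-$\lambda$} resolvent estimate, peels off $s(x,\xi)=\chi(\xi)\sigma(x,\xi)/|\xi|^b$, and invokes \eqref{1.1}, \eqref{1.2} for the principal part. That principal part is fine, and the conjugation $\jp{X}^{\mp(1/2+l)}s(X,D)\jp{X}^{\pm(1/2+l)}$ is indeed $L^2$-bounded (again by \cite[Theorem 1.1]{RS1}).

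The gap is in the remainder $r(x,\xi)=(1-\chi(\xi))\sigma(x,\xi)$. You must bound $\jp{X}^{-1/2-l}r(X,D)R(\lambda\pm i0)\cdots$ \emph{uniformly in $\lambda$}. Appealing to ``the reasoning of \eqref{eq:res2-2}'' only covers $\lambda=0$, where homogeneity of $p(D)^{-2}$ lets Lemma \ref{lemma2} apply; for $\lambda\neq 0$ one needs \eqref{eq:res2-1}, and the scaling that the paper uses there sends $(x,\xi)\mapsto(|\lambda|^{-1}x,|\lambda|\xi)$, so the fixed compact $\xi$-support of $r$ is not preserved under this rescaling. In the paper's Section \ref{Section3} the low-frequency part is handled not by Lemma \ref{SW} alone but via Lemma \ref{SuTsu}, \eqref{simple} and Lemma \ref{lemma1}, i.e., the full machinery behind \eqref{low}. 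So as written, the low-frequency/$\lambda$-uniform estimate for $r$ is not justified. The paper's route is more economical precisely because the factorisation through \eqref{est:sm1}, \eqref{est:sm2} eliminates the $\lambda$-dependence before the low-frequency decomposition is made.
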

\begin{proof}
We may assume $0<l<(n-1)/2$ 
since the estimate with $l\geq(n-1)/2$ is
a weaker result.
Let $0<l'<l<(n-1)/2$ and let us
factorise $\jp{x}^{-1/2-l}\sigma(X,D)$ as
\begin{align*}
\jp{x}^{-1/2-l}\sigma(X,D)
&=\jp{x}^{-1/2-l}\sigma(X,D)|D|^{-1/2}\jp{x}^{{1/2+l'}}
\cdot
\jp{x}^{{-1/2-l'}}|D|^{1/2}
\\
&=\jp{x}^{-1/2-l}\sigma(X,D)|D|^{-1}\jp{x}^{{1/2+l'}}
\cdot
\jp{x}^{{-1/2-l'}}|D|.
\end{align*}
On account of estimates \eqref{est:sm1} and \eqref{est:sm2},
it suffices to show the $L^2$-boundedness of the operator
$\jp{x}^{-1/2-l}\sigma(X,D)\jp{x}^{{1/2+l'}}$
assuming $\sigma(x,\xi)\sim|\xi|^0$.
Let $\chi(\xi)\in \mathcal{C}^\infty_0\p{\R^n}$ be a function
which is equal to $1$ near the origin.
Then by the symbolic calculus and the $L^2$-boundedness
of pseudo-differential operators of class $S^0$
(see also \cite[Theorem 1.1]{RS1}),
the operator $\jp{x}^{-1/2-l}\sigma(X,D)(1-\chi)(D)\jp{x}^{{1/2+l'}}$
is $L^2$-bounded.
On the other hand, the $L^2$-boundedness of the operator
$\jp{x}^{-1/2-l}\sigma(X,D)\chi(D)\jp{x}^{{1/2+l'}}$ is reduced to
that of $|x|^{-1/2-l}\sigma(X,D)\chi(D)|x|^{{1/2+l'}}$ 
and $|x|^{-1/2-l}\sigma(X,D)\chi(D)$
since $\jp{x}^{-(1/2+l)}\leq |x|^{-(1/2+l)}$
and $\jp{x}^{1/2+l'}\leq C(1+|x|^{1/2+l'})$. 
Due to Lemma \ref{lemma1}, they are further reduced to that of
$|x|^{-1/2-l}\sigma(X,D)|D|^{-(l-l')}|x|^{{1/2+l'}}$ 
and $|x|^{-1/2-l}\sigma(X,D)|D|^{-1/2-l}$, which are obtained from
Lemma \ref{lemma2} with $b=l-l',\delta=1/2+l$ and $b=1/2+l,\delta=1/2+l$,
respectively. 
\end{proof}
\medskip
We have also a result similar to
Theorem \ref{thm:thm62} for the solution to homogeneous equation
\eqref{eq:hom}: 
\medskip
\begin{thm}\label{thm:thm61}
Let $n\geq3$.
Suppose that $\sigma(x,\xi)\sim|\xi|$ and
$\sigma(x,\xi)=0$ on $\Gamma$.
Then the solution $u$ to equation \eqref{eq:hom}
satisfies the estimate
\[
\n{\jp{x}^{-1/2+l}\sigma(X,D) u}_{L^2\p{\R_t\times\R^n_x}}
\leq C
\n{\jp{x}^{\alpha l}\jp{D}^{1/2}\varphi}_{L^2(\R^n_x)}
\]
for $0<l<1$ and $\alpha>3/2$.
\end{thm}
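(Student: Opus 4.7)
The plan is to use Parseval in $t$ together with Stone's formula to convert the homogeneous smoothing bound into a spectral integral, apply the critical resolvent estimate of Theorem \ref{RE}, and close the argument by a Littlewood--Paley frequency decomposition of $\varphi$, using the factor $\jp{x}^{\alpha l}\jp{D}^{1/2}$ to absorb the loss from the $\lambda$-integration.

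Setting $A:=\jp{x}^{-1/2+l}\sigma(X,D)$, and noting that $\mathcal{F}_t[e^{-itL}\varphi](\tau)=i\,[R(-\tau+i0)-R(-\tau-i0)]\varphi$ is supported in $\tau\le 0$, Parseval in $t$ gives
\[
\int \n{A e^{-itL}\varphi}^2 dt \;=\; \frac{2}{\pi}\int_0^\infty \n{A\,\Im R(\lambda+i0)\varphi}^2 d\lambda.
\]
By estimate \eqref{eq:res1} of Corollary \ref{REcor}, valid for $0<l<1$ when $n\ge 3$, one has the uniform bound
\[
\sup_\lambda \n{A R(\lambda\pm i0) g}_{L^2}\;\le\; C\n{\jp{x}^{1/2+l} g}_{L^2},
\]
so that $\n{A\,\Im R(\lambda+i0)\varphi}_{L^2}\le C\n{\jp{x}^{1/2+l}\varphi}_{L^2}$ uniformly in $\lambda$; this uniform bound alone is however not $\lambda$-integrable and must be refined.

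The refinement uses that $\Im R(\lambda+i0)$ localises the Fourier support of $\varphi$ to the energy shell $\{p(\xi)^2=\lambda\}$. Decomposing $\varphi=\sum_k\varphi_k$ with $\widehat{\varphi_k}$ supported in $\{|\xi|\sim 2^k\}$, only $\varphi_k$ with $\sqrt\lambda\sim 2^k$ contributes at energy $\lambda$, so by almost orthogonality in $\lambda$ combined with the uniform resolvent bound on each $\lambda$-band of length $\sim 4^k$,
\[
\int_0^\infty \n{A\,\Im R(\lambda+i0)\varphi}^2 d\lambda \;\lesssim\; \sum_k 4^k\,\n{\jp{x}^{1/2+l}\varphi_k}^2.
\]
The proof is then reduced to the weighted Sobolev-type inequality
\[
\sum_k 4^k\,\n{\jp{x}^{1/2+l}\varphi_k}^2 \;\le\; C\,\n{\jp{x}^{\alpha l}\jp{D}^{1/2}\varphi}^2,
\]
in which the factor $4^k$ is a full power of $|D|$ on $\varphi_k$: half of this is supplied by $\jp{D}^{1/2}$ on the right-hand side, and the other half, together with the trade-off between the weights $\jp{x}^{1/2+l}$ on the left and $\jp{x}^{\alpha l}$ on the right, is handled by Plancherel and by commutator estimates between $\phi_k(D)$ and $\jp{x}^{\alpha l}$ together with the Kurtz--Wheeden inequality (Lemma \ref{lemma1}).

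The main technical obstacle is this last weighted inequality. The commutators $[\phi_k(D),\jp{x}^{\alpha l}]$ generate lower-order terms that must be summed back against the $4^k$-weight and controlled via the Hardy--Littlewood bound of Lemma \ref{SW}; the threshold $\alpha>3/2$ is exactly what is required for this commutator bookkeeping and the corresponding Fourier-side Sobolev embedding to close, balancing the dyadic frequency weight on the left against the spatial weight and the half derivative already encoded in $\jp{D}^{1/2}\varphi$.
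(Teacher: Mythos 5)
There is a genuine gap, and it is in the step you yourself flag as the ``main technical obstacle.'' Your Littlewood--Paley reduction
\[
\int_0^\infty\n{A\,\Im R(\lambda+i0)\varphi}^2\,d\lambda\;\lesssim\;\sum_k 4^k\,\n{\jp{x}^{1/2+l}\varphi_k}^2,
\]
obtained by applying the uniform bound of Corollary~\ref{REcor} on each dyadic band of $\lambda$-measure $\sim 4^k$, is indeed the best you can get from that estimate, since it has no decay in $\lambda$. But the target
\[
\sum_k 4^k\,\n{\jp{x}^{1/2+l}\varphi_k}^2\;\le\;C\,\n{\jp{x}^{\alpha l}\jp{D}^{1/2}\varphi}^2
\]
is false as stated, and no commutator bookkeeping can repair it. The left-hand side is, up to lower-order terms, $\n{\jp{x}^{1/2+l}|D|\varphi}^2$: a full derivative and weight $\jp{x}^{1/2+l}$. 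The right-hand side has only $\jp{D}^{1/2}$ and weight $\jp{x}^{\alpha l}$. For $l$ small (say $l\to 0$ with $\alpha$ fixed, $\alpha>3/2$ arbitrary), $\alpha l\to 0$ while $1/2+l\to 1/2$, so you are simultaneously demanding a gain of half a derivative and a gain of polynomial weight; Hardy--Littlewood--Sobolev (Lemma~\ref{SW}) trades weight \emph{for} smoothing, not both at once, and commutators $[\phi_k(D),\jp{x}^{\alpha l}]$ produce only lower-order corrections and cannot recover a half derivative on the main term. The threshold $\alpha>3/2$ plays no role at this point — the inequality simply fails on dimensional/scaling grounds for small $l$.

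The paper's actual argument avoids this issue by \emph{not} using the resolvent estimate at high frequencies at all. It splits $u=u_{\textit{low}}+u_{\textit{high}}$ by an energy cutoff $\chi(L)$. On $u_{\textit{low}}$ the $\lambda$-integration is over a compact set, so the uniform resolvent estimate \eqref{eq:res1} is enough, and the frequency cutoff lets $\jp{D}^{1/2}$ come for free. On $u_{\textit{high}}$ the key ingredients are: Lemma~\ref{basiclem}, which uses the structure condition $\sigma=0$ on $\Gamma$ to convert $\jp{x}^{-1/2+l}\sigma(X,D)$ into $\Omega_{ij}(X,D)$ at the cost of a \emph{stronger} spatial weight $\jp{x}^{-1/2-(1-l)}$; the commutation $[\Omega_{ij}(X,D),e^{-itL}]=0$ (Lemma~\ref{Com}), which moves $\Omega_{ij}$ onto the data; and the structureless smoothing estimate of Proposition~\ref{prop:nonst}, which is strong enough precisely because the weight is now subcritical. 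This yields $\n{\jp{x}^{1/2+l}\jp{D}^{1/2}\varphi}_{L^2}$ on the right for $1/2\le l<1$. Finally, the stated weight $\jp{x}^{\alpha l}$ with $\alpha>3/2$ (which is strictly better than $\jp{x}^{1/2+l}$ for small $l$) comes from \emph{interpolating} with the $l=0$ estimate of Theorem~\ref{thm:thm611}, whose right-hand side weight is $\jp{x}^0$. Your proposal contains no analogue of the high/low split, the $\Omega_{ij}$--propagator commutation, or the interpolation step, and so cannot reach the stated conclusion.
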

\begin{proof}
We decompose the solution $u=e^{-itL}\varphi$ into
the following two parts:
\[
u_{\textit{low}}=e^{-itL}\chi(L)\varphi,\qquad
u_{\textit{high}}=e^{-itL}(1-\chi(L))\varphi,
\]
where $\chi\in C^\infty_0(\R)$ is a function such that
$\chi(\xi)=1$ for $|\xi|<1$, and $\chi(L)=(\chi\circ p^2)(D)$.
Let $\tilde\chi\in C^\infty_0(\R)$ be another function such that
$\tilde\chi(\xi)=1$ on $\supp \chi$.
Then we have $u_{\textit{low}}=e^{-itL}\chi(L)\tilde\varphi$,
where $\tilde\varphi=(\tilde\chi\circ p^2)(D)\varphi$.
Let
\[
L=\int\lambda dE_L(\lambda)=\int\lambda A_L(\lambda)\,d\lambda
\]
be the spectral decomposition of the self-adjoint operator $L$ with
spectral measure $E_L(\lambda)$,
and
\[
A_L(\lambda)=\frac{dE_L(\lambda)}{d\lambda}
=
\frac1{2\pi i}
\p{R(\lambda+i0)-R(\lambda-i0)}
\]
be the corresponding spectral density.
Then we can write
\[
u_{\textit{low}}=
e^{-itL}\chi(L)\tilde\varphi
=
\int e^{-it\lambda}\chi(\lambda)
A_L(\lambda)\tilde\varphi
\,d\lambda,
\]
and by Plancherel's theorem (see also 
\cite[Section XIII.7, Lemma 1]{ReS}),
estimate \eqref{eq:res1} in Corollary \ref{REcor},
and Lemma \ref{lemma1}, we have 
\begin{align*}
&\n{\jp{x}^{-1/2+l}\sigma(X,D) 
u_{\textit{low}}}_{L^2\p{\R_t\times\R^n_x}}^2
\\
=&
2\pi\int|\chi(\lambda)|^2 
 \n{\jp{x}^{-1/2+l}\sigma(X,D)A_L(\lambda)\tilde\varphi}^2
_{L^2(\R^n_x)}\,d\lambda
\\
\leq&
C
\sup_{\lambda\in\R} \n{\sigma(X,D)
\p{R(\lambda+i0)-R(\lambda-i0)}
\tilde\varphi}_{L^2_{-1/2+l}(\R^n_x)}^2
\\
\leq&
C\n{\tilde\varphi}_{L^2_{1/2+l}(\R^n_x)}^2
\leq
C\n{\jp{x}^{1/2+l}\jp{D}^{1/2}\varphi}_{L^2(\R^n_x)}^2.
\end{align*}
On the other hand, by Lemma \ref{basiclem}, Proposition \ref{prop:nonst},
Lemma \ref{SW}, and Lemma \ref{lemma1}, we have
\begin{align*}
&\n{\jp{x}^{-1/2+l}\sigma(X,D) u_{\textit{high}}}_{L^2\p{\R_t\times\R^n_x}}
=
\n{\jp{x}^{-1/2+l}\sigma(X,D)
(1-\chi(L))e^{-itL}\varphi}_{L^2\p{\R_t\times\R^n_x}}
\\
\leq&
C\p{
\sum_{i<j}
\n{\jp{x}^{-1/2-(1-l)}
e^{-itL}\Omega_{ij}(X,D) \varphi}_{L^2\p{\R_t\times\R^n_x}}
+
\n{\jp{x}^{-1/2-(1-l)} e^{-itL}\varphi}_{L^2\p{\R_t\times\R^n_x}}
}
\\
\leq&
C\p{
\sum_{i<j}
\n{\Omega_{ij}(X,D)p(D)^{-1/2} \varphi}_{L^2(\R^n_x)}
+
\n{|D|^{-1/2} \varphi}_{L^2(\R^n_x)}
}
\\
\leq&
C\n{\jp{x}\jp{D}^{1/2}\varphi}_{L^2(\R^n_x)}
\end{align*}
since $\Omega_{ij}(X,D)$ commutes with $e^{-itL}$ and
$p(D)^{-1/2}$ by Lemma \ref{Com}.
Hence, for $1/2\leq l<1$, we have
\[
\n{\jp{x}^{-1/2+l}\sigma(X,D) u}_{L^2\p{\R_t\times\R^n_x}}
\leq
C\n{\jp{x}^{1/2+l}\jp{D}^{1/2}\varphi}_{L^2(\R^n_x)}.
\]
The conclusion of the theorem is obtained if we interpolate this estimate
and the estimate
\[
\n{\jp{x}^{-1/2}\sigma(X,D) u}_{L^2\p{\R_t\times\R^n_x}}
\leq
C\n{\jp{D}^{1/2}\varphi}_{L^2(\R^n_x)},
\]
which is a direct consequence of Theorem \ref{thm:thm611} and
Lemma \ref{lemma1},
\end{proof}
\medskip
For $s,\tilde s\in\R$, let $H^{s,\tilde{s}}(\R_t\times\R^n_x)$ 
be the set of tempered
distributions $g$ on $\R_t\times\R^n_x$ such that the norm
\[
\n{g}_{H^{s,\tilde{s}}(\R_t\times\R^n_x)}
=\n{\jp{D_t}^s\jp{D_x}^{\tilde s}g}
_{L^2\p{\R_t\times\R^n_x}}
\]
is finite, where $\jp{D_t}^s=
\mathcal F_\tau^{-1}\jp{\tau}^s\mathcal F_t$ and
$\jp{D_x}^{\tilde s}=
\mathcal F_\xi^{-1}\jp{\xi}^{\tilde s}\mathcal F_x$.
Combining Theorems \ref{thm:thm62} and \ref{thm:thm61},
we have the following result:
\medskip
\begin{cor}\label{thm:thm6}
Let $n\geq3$, let $0\leq s\leq1$, and let $\tilde{s}\geq0$.
Suppose that $\sigma(x,\xi)\sim|\xi|$ and
$\sigma(x,\xi)=0$ on $\Gamma$.
Then the solution $u$ to equation
\begin{equation}\label{eq:gen}
\left\{
\begin{aligned}
\p{i\partial_t-L}\,u(t,x)&=f(t,x),\\
u(0,x)&=\varphi(x),
\end{aligned}
\right.
\end{equation}
satisfies the estimate
\begin{multline*}
\n{\jp{x}^{-1/2+l}\sigma(X,D) u}_{H^{s,\tilde{s}}(\R_t\times\R^n_x)}
\\
\leq C
\p{
\n{\jp{x}^{\alpha l}\jp{D}^{2s+\tilde{s}+1/2}\varphi}_{L^2(\R^n_x)}
+\n{\jp{x}^{1/2+l}f}_{H^{s,\tilde{s}}(\R_t\times\R^n_x)}
}
\end{multline*}
for $0<l<1$ and $\alpha>3/2$.
\end{cor}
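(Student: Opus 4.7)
The plan is to reduce Corollary \ref{thm:thm6} to Theorems \ref{thm:thm61} and \ref{thm:thm62} by linearity and by commuting the operators $\jp{D_t}^s$ and $\jp{D_x}^{\tilde{s}}$ defining the $H^{s,\tilde{s}}$-norm through the left-hand side so that they act on the data $\varphi$ and $f$.

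First I would split the solution as $u = u_1 + u_2$, where $u_1 = e^{-itL}\varphi$ solves the homogeneous equation \eqref{eq:hom} and $u_2$ is the inhomogeneous solution to \eqref{eq:inhom} constructed in Section \ref{Section5} via the resolvent formula. By linearity, it suffices to bound $\n{\jp{x}^{-1/2+l}\sigma(X,D)u_1}_{H^{s,\tilde{s}}}$ by the $\varphi$-term on the right and $\n{\jp{x}^{-1/2+l}\sigma(X,D)u_2}_{H^{s,\tilde{s}}}$ by the $f$-term. Since $\jp{D_t}^s$ acts only in $t$, it commutes with $\jp{x}^{-1/2+l}\sigma(X,D)$, and it commutes with the Schr\"odinger evolution via the spectral identity $\jp{D_t}^s e^{-itL}\varphi = e^{-itL}\jp{L}^s\varphi$. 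The operator $\jp{D_x}^{\tilde{s}}$ commutes with $L$ and with $e^{-itL}$, and can be commuted past $\jp{x}^{-1/2+l}\sigma(X,D)$ up to a commutator of strictly lower order in $D_x$.

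For the homogeneous part, these commutations reduce the task to Theorem \ref{thm:thm61} applied to the modified initial datum $\jp{L}^s\jp{D_x}^{\tilde{s}}\varphi$. Since $L = p(D)^2$ with $p(\xi)\sim|\xi|$, we have $\jp{L}^s \sim \jp{D}^{2s}$ on weighted $L^2$-spaces by Lemma \ref{lemma1}, and this produces exactly the factor $\jp{D}^{2s+\tilde{s}+1/2}$ appearing on the right-hand side of the corollary. For the inhomogeneous part, I would use the representation
\[
\mathcal{F}_t u_2(\tau,x) = i^{-1}R(-\tau \pm i0)\mathcal{F}_tf^\pm(\tau,x)
\]
from Section \ref{Section5}, insert the multiplier $\jp{\tau}^s\jp{\xi}^{\tilde{s}}$, and argue as in the proof of Theorem \ref{thm:thm62}, exploiting the uniformity in $\lambda$ of the resolvent estimate in Theorem \ref{RE} together with Plancherel in time to convert $\jp{\tau}^s$ into $\jp{D_t}^s$ acting on $f$.

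The main technical obstacle is controlling the various commutators $[\jp{D_x}^{\tilde{s}}, \jp{x}^{-1/2+l}]$ and $[\jp{D_x}^{\tilde{s}}, \sigma(X,D)]$, together with the replacement of $\jp{L}^s$ by $\jp{D}^{2s}$ on $L^2$-spaces weighted by $\jp{x}^{\alpha l}$. All of these are handled by pseudo-differential symbolic calculus together with the Kurtz-Wheeden bound of Lemma \ref{lemma1}; each commutator is strictly lower order in $D_x$ and hence absorbable by iterating the base-case estimates from Theorems \ref{thm:thm61} and \ref{thm:thm62}.
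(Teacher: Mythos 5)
Your overall strategy---split into homogeneous and inhomogeneous parts, commute the fractional derivatives through, and reduce to Theorems \ref{thm:thm61} and \ref{thm:thm62}---is in the same family as the paper's, and your treatment of the homogeneous part (using $\jp{D_t}^s e^{-itL}\varphi = e^{-itL}\jp{L}^s\varphi$ and Lemma \ref{lemma1} to replace $\jp{L}^s$ by $\jp{D}^{2s}$ on weighted spaces) is correct. However, your plan for the inhomogeneous part contains a real gap. In the resolvent representation
\[
\hat{u}_2(\tau,x) = \tfrac1i R(-\tau+i0)\widehat{f^{+}}(\tau,x)+\tfrac1i R(-\tau-i0)\widehat{f^{-}}(\tau,x),
\]
inserting $\jp{\tau}^s$ does \emph{not} produce $\widehat{\jp{D_t}^s f}$ acting on the right-hand side, because the multiplier $\jp{\tau}^s$ does not commute with the Heaviside cutoffs defining $f^{\pm}=Y(\pm t)f$. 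Multiplication by a Heaviside function fails to be bounded on $H^s(\R_t)$ for $s\geq 1/2$, and the corollary requires the full range $0\leq s\leq1$. Concretely, after one time-derivative one has $D_t u_2(0,x)= -i f(0,x)\neq 0$, so a boundary term involving the trace $f(0,\cdot)$ must be confronted; your multiplier argument as stated cannot see it.

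The paper avoids this pitfall by differentiating the equation \eqref{eq:gen} directly rather than passing through the Fourier representation: applying $D_x^k$ and $D_tD_x^k$ yields new Cauchy problems with data $D^k\varphi$ and $-LD^k\varphi-D_x^kf(0,x)$, and the trace term $D_x^k f(0,\cdot)$ is controlled via the Sobolev embedding $H^1(\R_t)\hookrightarrow L^\infty(\R_t)$. The commutator $[D_x^k, \jp{x}^{-1/2+l}\sigma(X,D)]$ is then estimated using Proposition \ref{prop:nonst} (note the commutator symbol need no longer vanish on $\Gamma$, so the non-structured estimate is required---your remark that the commutators are ``lower order and absorbable'' should be made precise on this point), and the general $(s,\tilde s)$ case is reached by interpolating the integer estimates with $j\in\{0,1\}$. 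Your proposal would need to incorporate an analogous handling of the $t=0$ trace of $f$ before the $s\geq1/2$ cases (and in particular the endpoint $s=1$ needed for interpolation) can be justified.
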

\begin{proof}
Differentiating equation \eqref{eq:gen},
we have
\[
\left\{
\begin{aligned}
\p{i\partial_t-L}\,D_x^ku(t,x)&=D_x^kf(t,x),\\
D_x^ku(0,x)&=D^k\varphi(x),
\end{aligned}
\right.
\]
and
\[
\left\{
\begin{aligned}
\p{i\partial_t-L}\,D_tD_x^ku(t,x)&=D_tD_x^kf(t,x),\\
D_tD_x^ku(0,x)&=-LD^k\varphi(x)-D_x^kf(0,x),
\end{aligned}
\right.
\]
for non-negative integers $k$.
Then by Theorems \ref{thm:thm62} and \ref{thm:thm61},
we have the estimate
\begin{multline*}
\n{\jp{x}^{-1/2+l}\sigma(X,D) D_t^jD_x^ku}
_{L^2\p{\R_t\times\R^n_x}}
\\
\leq C
\p{
\n{\jp{x}^{\alpha l}\jp{D}^{2j+k+1/2}\varphi}_{L^2(\R^n_x)}
+
\n{\jp{x}^{1/2+l}\jp{D_t}^{j}\jp{D_x}^kf(t,x)}_{L^2(\R_t\times\R^n_x)}
}
\\
\leq C
\p{
\n{\jp{x}^{\alpha l}\jp{D}^{2j+k+1/2}\varphi}_{L^2(\R^n_x)}
+
\n{\jp{x}^{1/2+l}f(t,x)}_{H^{j,k}(\R_t\times\R^n_x)}
},
\end{multline*}
for $j=0,1$.
Here we have used Lemma \ref{lemma1}, Sobolev's
embedding $H^1(\R_t)\hookrightarrow L^\infty(\R_t)$, and
the $L^2$-boundedness of the operator
$\jp{x}^{1/2+l}\jp{D}^k\jp{x}^{-(1/2+l)}\jp{D}^{-k}$
which can be justified by the symbolic calculus and 
the $L^2$-boundedness of
pseudo-differential operators of class $S^0$.
On the other hand, the commutator $[D_x^k,\,\jp{x}^{-1/2+l}\sigma(X,D)]$
is again a pseudo-differential operator with a symbol of the form
$\jp{x}^{-1/2-(1-l)}\tau(x,\xi)$ where $\tau(x,\xi)\sim|\xi|^{k'}$
($0\leq k'\leq k$).
Hence, if we use Proposition \ref{prop:nonst} instead,
we have similarly the estimate
\begin{align*}
&\n{[D_x^k,\,\jp{x}^{-1/2+l}\sigma(X,D)]D_t^ju}
_{L^2\p{\R_t\times\R^n_x}}
\\
\leq &C
\p{
\n{\jp{D}^{2j+k-1/2}\varphi}_{L^2(\R^n_x)}
+
\n{\jp{x}^{1/2+l}f(t,x)}_{H^{j,k}(\R_t\times\R^n_x)}
}
\\
\leq &C
\p{
\n{\jp{x}^{l}\jp{D}^{2j+k+1/2}\varphi}_{L^2(\R^n_x)}
+
\n{\jp{x}^{1/2+l}f(t,x)}_{H^{j,k}(\R_t\times\R^n_x)}
}
\end{align*}
since $1-l, l>0$.
Thus we have the desired estimate if $s,\tilde s$ are integers.
By interpolation, we have the conclusion.
\end{proof}

\medskip
\par
\section{Derivative Nonlinear Schr\"odinger equations with structure}
\label{Section6}
Estimates obtained in the previous section can be used to show
a time global existence result for 
derivative nonlinear Schr\"odinger equations.
We consider the power
of the {\it derivative} $\sigma(X,D)u$ of $u(t,x)$
in the space variable $x$ as nonlinear term.
\medskip
\begin{thm}\label{thm:thm7}
Let $n\geq3$, let $N\geq4$, $N\in\N$, let $\kappa>3(N+1)/4(N-1)$,
and let $\tau>(n+3)/2$.
Suppose that $\sigma(x,\xi)\sim|\xi|$ and
$\sigma(x,\xi)=0$ on $\Gamma$.
Assume that
$\langle x\rangle^\kappa \langle D\rangle^\tau\varphi\in L^2$ and
that its $L^2$-norm is sufficiently small.
Then the equation 
\begin{equation}\label{eq:eq4}
\left\{
\begin{aligned}
\p{i\partial_t-L}\,u(t,x)=&\p{\sigma(X,D)u}^N\\
u(0,x)=\varphi(x),\quad &t\in\R,\, x\in\R^n
\end{aligned}
\right.
\end{equation}
has a time global solution $u\in C^0(\R_t\times\R^n_x)$.
\end{thm}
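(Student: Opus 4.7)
The plan is to apply Banach's contraction principle to the Picard map $\Phi(u)=w$, where $w$ is the solution of the linear Cauchy problem $\p{i\partial_t - L}w = \p{\sigma(X,D) u}^N$, $w(0)=\varphi$. The hypotheses on $N,\kappa,\tau$ are tuned so that I can choose: a smoothing exponent $l$ with $(N+1)/(2(N-1)) \le l < 1$ (possible precisely because $N \ge 4$); a scaling constant $\alpha > 3/2$ with $\alpha l \le \kappa$ (possible precisely because $\kappa > 3(N+1)/(4(N-1))$); and Sobolev exponents $s > 1/2$, $\tilde s > n/2$ with $2s + \tilde s + 1/2 \le \tau$ (possible precisely because $\tau > (n+3)/2$). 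With these choices fixed, define the Banach space $X$ of all $u$ for which
\[
\n{u}_X := \n{\jp{x}^{-1/2+l}\sigma(X,D) u}_{H^{s,\tilde s}(\R\times\R^n)} + \n{u}_{L^\infty(\R_t;H^{\tilde s}(\R^n_x))}
\]
is finite; the second term is included so that the fixed point will automatically be continuous.

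For the linear estimate on $\Phi$, Corollary \ref{thm:thm6} controls the first part of $\n{\Phi(u)}_X$, yielding
\[
\n{\jp{x}^{-1/2+l}\sigma(X,D)\Phi(u)}_{H^{s,\tilde s}} \le C\n{\jp{x}^\kappa \jp{D}^\tau \varphi}_{L^2} + C\n{\jp{x}^{1/2+l}\p{\sigma(X,D) u}^N}_{H^{s,\tilde s}},
\]
while the second part is controlled via Duhamel's formula, the unitarity of $e^{-itL}$ on $H^{\tilde s}$, and a H\"older interpolation bound of $\n{\p{\sigma(X,D) u}^N}_{L^1_t H^{\tilde s}_x}$ against $\n{u}_X$. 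The crucial nonlinear estimate rests on the algebraic identity
\[
\jp{x}^{1/2+l} v^N = \jp{x}^{(N+1)/2 - (N-1)l}\,\prod_{i=1}^N\p{\jp{x}^{-1/2+l} v},
\]
whose leading factor has exponent $\le 0$ by the choice of $l$ and hence acts as a bounded $C^\infty$ multiplier on $H^{s,\tilde s}$. Combined with the algebra property of $H^{s,\tilde s}$ (valid because $s > 1/2$, $\tilde s > n/2$), this gives
\[
\n{\jp{x}^{1/2+l}\p{\sigma(X,D) u}^N}_{H^{s,\tilde s}} \le C\n{u}_X^N,
\]
and the same calculation applied to $v_1^N - v_2^N = (v_1-v_2)\sum_{k=0}^{N-1} v_1^k v_2^{N-1-k}$ yields
\[
\n{\Phi(u_1)-\Phi(u_2)}_X \le C\p{\n{u_1}_X^{N-1}+\n{u_2}_X^{N-1}}\n{u_1-u_2}_X.
\]
For sufficiently small $\n{\jp{x}^\kappa \jp{D}^\tau \varphi}_{L^2}$ I then choose $R$ so that $\Phi$ preserves and contracts the closed $R$-ball in $X$; Banach's theorem produces the fixed point $u \in X$. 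The embedding $H^{\tilde s}(\R^n) \hookrightarrow C^0(\R^n)$ for $\tilde s > n/2$, together with continuity in $t$ from Duhamel's formula, gives $u \in C^0(\R_t \times \R^n_x)$.

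The main obstacle is the coordinated parameter choice in the nonlinear estimate: the weight $\jp{x}^{1/2+l}$ on the forcing side of the smoothing estimate must be absorbed by $N$ weights of the form $\jp{x}^{-1/2+l}$ coming from $N$ factors, which forces $l \ge (N+1)/(2(N-1))$. The structural freedom to push $l$ up to (but not including) $1$ --- afforded by the null-type condition $\sigma(x,\xi) = 0$ on $\Gamma$ via Corollary \ref{thm:thm6} --- is precisely what enables the contraction to close, and $N \ge 4$ is exactly the threshold at which the required range for $l$ becomes nonempty in $(0,1)$.
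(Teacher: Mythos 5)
Your overall architecture --- contraction mapping driven by the critical smoothing estimate of Corollary \ref{thm:thm6}, the algebra property of $H^{s,\tilde s}$, and the weight-balancing identity whose exponent $(N+1)/2-(N-1)l$ becomes non-positive once $l\geq(N+1)/(2(N-1))$ --- matches the paper, and your accounting of the parameter thresholds ($N\geq 4$, $\kappa>(3/2)l$, $\tau=2s+\tilde s+1/2$) is correct. However, there is a genuine gap in your construction of the Banach space. You add the term $\n{u}_{L^\infty(\R_t;H^{\tilde s}(\R^n_x))}$ to $\n{u}_X$ in order to make $\n{\cdot}_X$ a genuine norm (which the quantity $\n{\jp{x}^{-1/2+l}\sigma(X,D)u}_{H^{s,\tilde s}}$ alone is not, since $\sigma(X,D)$ has a kernel), but this extra term cannot be closed under $\Phi$. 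The Duhamel bound you invoke would require $\p{\sigma(X,D)u}^N\in L^1_t H^{\tilde s}_x$, and the time interval here is all of $\R$; the available information from $\n{u}_X$ only controls $\jp{x}^{-1/2+l}\sigma(X,D)u$ in $H^{s,\tilde s}$ (equivalently, in $L^2_t H^{\tilde s}_x\cap L^\infty_t H^{\tilde s}_x$ after Sobolev embedding in $t$) and $u$ in $L^\infty_t H^{\tilde s}_x$. The decaying weight $\jp{x}^{-1/2+l}$ cannot be removed, so $\sigma(X,D)u$ itself is not in any unweighted $L^p_t H^{\tilde s}_x$, and the "H\"older interpolation" you gesture at does not produce the needed $L^1_t H^{\tilde s}_x$ bound. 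Consequently, $\Phi$ is not shown to map your space $X$ into itself, and Banach's theorem does not apply in the form stated.

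The paper's proof resolves this point differently and this is the crux of the argument, not a formality. Rather than repairing $\n{\cdot}_X$ into a norm, the authors iterate the Picard map and observe that the associated sequence $\jp{x}^{-1/2+l}\sigma(X,D)u_n$ is Cauchy in the genuine Banach space $H^{s,\tilde s}(\R_t\times\R^n_x)$; they extract its limit $\jp{x}^{-1/2+l}w$, then \emph{define} $u$ as the solution of the linear Cauchy problem with right-hand side $w^N$, and finally verify a posteriori that $w=\sigma(X,D)u$ by applying the contraction estimate once more. Continuity of $u$ is then obtained from $w^N\in C^0(\R_t;H^{\tilde s})$ (Sobolev in $t$ from $s>1/2$) and the explicit Duhamel representation $u=e^{-itL}\varphi+\tfrac1i\int_0^t e^{-i(t-s)L}w(s)^N\,ds$, which gives $u\in C^0(\R_t;H^{\tilde s})\subset C^0(\R_t\times\R^n_x)$ without ever asserting a uniform-in-time $H^{\tilde s}$ bound for $u$. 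If you want to repair your version, you would need to either replace the $L^\infty_t H^{\tilde s}_x$ component by something actually propagated by the estimates (for instance not impose global-in-time uniform boundedness at all) or adopt the paper's reconstruction of $u$ from the limiting $w$.
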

\medskip
The key point of the proof
is that the space $H^{s,\tilde{s}}=H^{s,\tilde{s}}(\R_t\times\R^n_x)$ is
an algebra
if $s>1/2$ and $\tilde{s}>n/2$.
Then we have
\begin{equation}\label{alg1}
\begin{aligned}
\n{\jp{x}^{1/2+l}F^N}_{H^{s,\tilde{s}}(\R_t\times\R^n_x)}
\leq&
C\n{\jp{x}^{1/(2N)+l/N}
F}_{H^{s,\tilde{s}}(\R_t\times\R^n_x)}^N
\\
\leq&
C\n{\jp{x}^{-1/2+l}F}_{H^{s,\tilde{s}}(\R_t\times\R^n_x)}^N
\end{aligned}
\end{equation}
if $l=(N+1)/(2N-2)$.
Note that $1/2<l<1$ if $N\geq4$.
Using the formula
$F^N-G^N=(F-G)(F^{N-1}+F^{N-2}G+\cdots+G^{N-1})$,
we have similarly
\begin{multline}\label{alg2}
\n{\jp{x}^{1/2+l}(F^N-G^N)}
_{H^{s,\tilde{s}}(\R_t\times\R^n_x)}
\\
\leq
C
\p{
\sum_{j=0}^{N-1}
\n{\jp{x}^{-1/2+l}F}^{N-1-j}_{H^{s,\tilde{s}}}
\cdot
\n{\jp{x}^{-1/2+l}G}^j_{H^{s,\tilde{s}}}
}
\\
\times
\n{\jp{x}^{-1/2+l}(F-G)}_{H^{s,\tilde{s}}(\R_t\times\R^n_x)}.
\end{multline}

To prove Theorem \ref{thm:thm7},
we consider the mapping from $u_0(t,x)$ to the solution $u(t,x)$ for
\begin{equation}\label{defmap}
\left\{
\begin{aligned}
\p{i\partial_t-L}\,u(t,x)=&\p{\sigma(X,D)u_0}^N\\
u(0,x)=\varphi(x),\quad &t\in\R,\, x\in\R^n.
\end{aligned}
\right.
\end{equation}
We take $1/2<s\leq1$ and $\tilde s>n/2$
such that $\tau=2s+\tilde s +1/2$,
and let us use Corollary \ref{thm:thm6} with $f=\p{\sigma(X,D)u_0}^N$
and $l=(N+1)/(2N-2)$.
On account of \eqref{alg1}, we have an estimate
\begin{multline}\label{apri11}
\n{\jp{x}^{-1/2+l}\sigma(X,D) u}_{H^{s,\tilde{s}}(\R_t\times\R^n_x)}
\\
\leq C
\p{
\n{\jp{x}^\kappa\jp{D}^{\tau}\varphi}_{L^2(\R^n_x)}
+\n{\jp{x}^{-1/2+l}\sigma(X,D)u_0}_{H^{s,\tilde{s}}(\R_t\times\R^n_x)}^N
}
\end{multline}
for $\kappa>(3/2)l$.
On the other hand, let $\tilde u$ be the solution of the equation
\[
\left\{
\begin{aligned}
\p{i\partial_t-L}\,\tilde u(t,x)=&\p{\sigma(X,D)\tilde u_0}^N\\
u(0,x)=\varphi(x),\quad &t\in\R,\, x\in\R^n.
\end{aligned}
\right.
\]
Then by Corollary \ref{thm:thm6}
with $\varphi=0$,
we have
\[
\n{\jp{x}^{-1/2+l}\sigma(X,D) 
(u-\tilde u)}_{H^{s,\tilde{s}}(\R_t\times\R^n_x)}
\leq C\n{\jp{x}^{1/2+l}
(f-\tilde f)}_{H^{s,\tilde{s}}(\R_t\times\R^n_x)},
\]
where $f=\p{\sigma(X,D)u_0}^N$
and $\tilde f=\p{\sigma(X,D)\tilde u_0}^N$.
Then, on account of \eqref{alg2},
we have
\begin{multline}\label{apri22}
\n{\jp{x}^{-1/2+l}\sigma(X,D) (u-\tilde u)}
_{H^{s,\tilde{s}}(\R_t\times\R^n_x)}
\\
\leq
C
\p{
\sum_{j=0}^{N-1}
\n{\jp{x}^{-1/2+l}\sigma(X,D)u_0}^{N-1-j}_{H^{s,\tilde{s}}}
\cdot
\n{\jp{x}^{-1/2+l}\sigma(X,D)\tilde u_0}^j_{H^{s,\tilde{s}}}
}
\\
\times
\n{\jp{x}^{-1/2+l}\sigma(X,D)(u_0-\tilde u_0)}
_{H^{s,\tilde{s}}(\R_t\times\R^n_x)}.
\end{multline}
\par
Estimates \eqref{apri11} and \eqref{apri22} show that, 
if the first term of the right hand
side of \eqref{apri11} is sufficiently small,
the mapping $u_0$ to the solution $u$ for \eqref{defmap}
is a contraction on the space $X$
which collects all functions $u(t,x)$ with sufficiently small
$\n{u}_X=\n{\jp{x}^{-1/2+l}\sigma(X,D)u}_{H^{s,\tilde{s}}
(\R_t\times\R^n_x)}$.
(Note that $\n{\cdot}_X$ is not a norm
because $\n{u}_X=0$ does not always imply $u=0$).
Let us denote this mapping by $\Phi:X\to X$
noticing that the fixed point of it is a desired time global solution.
The {\it contraction} here means that we have
\[
\n{\Phi(u)-\Phi(\tilde u)}_{X}\leq \varepsilon\n{u-\tilde u}_X
\qquad (v,\tilde u\in X)
\]
with some $0<\varepsilon<1$.
Consider the sequence of functions $\b{u_n}_{n\in\N}$ in $X$ defined by
$u_n=\Phi(u_{n-1})$, $u_0=0$.
Then we have $\n{u_n-u_m}_X\to0$ if $m,n\to\infty$, which means that
$\b{\jp{x}^{-1/2+l}\sigma(X,D)u_n}_{n\in\N}$ is a Cauchy sequence in
$H^{s,\tilde{s}}(\R_t\times\R^n_x)$.
Hence we have a limit
\begin{equation}\label{cauchy}
\jp{x}^{-1/2+l}\sigma(X,D)u_n\to\jp{x}^{-1/2+l}w
\end{equation}
in $H^{s,\tilde{s}}(\R_t\times\R^n_x)$ by the completeness of it.
We note that $\n{\jp{x}^{-1/2+l}w}_{H^{s,\tilde{s}}(\R_t\times\R^n_x)}$
is sufficiently small again.
Let $u$ be the solution to 
\[
\left\{
\begin{aligned}
\p{i\partial_t-L}\,u(t,x)=&w^N\\
u(0,x)=\varphi(x),\quad &t\in\R,\, x\in\R^n.
\end{aligned}
\right.
\]
We note that
$\jp{x}^{1/2+l}w^N\in H^{s,\tilde s}(\R_t\times\R^n)$
by \eqref{alg1},
hence 
$w^N\in C^0(\R_t\,;H^{\tilde s}(\R^n_x))$
by Sobolev's embedding in the variable $t$.
Furthermore, on account of the expression
\[
u(t,x)=e^{-itL}\varphi(x)+\frac1i\int_0^te^{-i(t-s)L}w(s,x)^N\,ds,
\]
we have
$u(t,x)\in C^0(\R_t\,;H^{\tilde s}(\R^n_x))\subset C^0(\R_t\times\R^n_x)$
by Sobolev's embedding again.
Then, by Corollary \ref{est:sm2}, \eqref{alg2}, and \eqref{cauchy},
we have
\begin{align*}
&\n{\jp{x}^{-1/2+l}\sigma(X,D) (u_n-u)}_{H^{s,\tilde{s}}(\R_t\times\R^n_x)}
\\
\leq &C
\n{\jp{x}^{1/2+l}(\p{\sigma(X,D)u_{n-1}}^N-w^N)}
_{H^{s,\tilde{s}}(\R_t\times\R^n_x)}
\to0
\end{align*}
hence we have $w=\sigma(X,D)u$ by \eqref{cauchy} again.
This means that $u$ is the desired
time global solution to \eqref{eq:eq4} and
the proof of Theorem \ref{thm:thm7} is complete.


\end{document}